\numberwithin{equation}{section}
\newtheorem{theorem}{Theorem}[section]
\newtheorem{conjecture}[theorem]{Conjecture}
\newtheorem{proposition}[theorem]{Proposition}
\newtheorem{corollary}[theorem]{Corollary}
\newtheorem{lemma}[theorem]{Lemma}
\newtheorem{definition}[theorem]{Definition}
\newtheorem{remark}[theorem]{Remark}
\newcommand{\optimal}{{extremal }}
\newcommand{\Optimal}{{Extremal }}
\def\ell{{l}}
\def\H{H}
\def\ul{\underline{\lambda}}
\def\Z{\mathbb{Z}}    \def\Q{\mathbb{Q}}
       \def\cF{{\mathcal F}}
\def\ul{\underline{\lambda}}
\begin{document}
\markboth{Julian Rosen}
{Multiple Harmonic Sums and Wolstenholme's Theorem}
\title{MULTIPLE HARMONIC SUMS AND WOLSTENHOLME'S THEOREM}

\author{JULIAN ROSEN}

\address{Department of Mathematics, University of Michigan\\
530 Church Street\\
Ann Arbor, MI 48109, USA\\ 
%\email{rosenjh@umich.edu}
}

\date{January 31, 2013}

\maketitle

%\begin{history}
%\received{(Day Month Year)}
%\accepted{(Day Month Year)}
%\comby{xxx}
%\end{history}

\begin{abstract}
We give a family of  congruences for the binomial coefficients ${kp-1\choose p-1}$ in terms of multiple harmonic sums, a generalization of the harmonic numbers. Each congruence in 
this family (which depends on an additional parameter $n$)
involves a linear combination of $n$ multiple harmonic sums, and holds $\mod{p^{2n+3}}$.  
The coefficients in these 
congruences are integers depending on $n$ and $k$, but independent of $p$. More generally, 
we construct a  family of congruences $\cF_{2n,k}$ for ${kp-1\choose p-1} \mod{p^{2n+3}}$, 
whose members contain a variable number of terms, and 
show that in this family there is a unique ``optimized'' congruence involving the fewest terms. The special case $k=2$ and $n=0$
recovers  Wolstenholme's theorem   ${2p-1\choose p-1}\equiv 1\mod{p^3}$, valid for all primes $p\geq 5$.
We also characterize those triples $(n, k, p)$ 
for which the optimized congruence holds modulo an extra power of $p$: they are precisely those with either $p$ dividing the numerator of the Bernoulli number $B_{p-2n-k}$, or $k \equiv 0, 1 \mod p$.
\end{abstract}

%\keywords{Harmonic numbers, multiple harmonic sums, Wolstenholme's theorem, binomial coefficients, congruences}
%
%\ccode{Mathematics Subject Classification 2010: 11A07,11B65}

%********************************
%
% Section 1
%
%********************************
\section{Introduction}
In 1862 the Rev.\ J.\ Wolstenhome \cite{Wol1862}
noted the congruence that for all primes $p \geq 5$,
\[ 
{2p-1\choose p-1}\equiv 1 \pmod{p^3}.
\]
This result is now called Wolstenholme's
theorem.   Later it was found that the related congruence
on harmonic numbers $H_{n} :=\sum_{j=1}^n\frac{1}{j}$, 
stating that for all primes $p \ge 5$,
\[
H_{p-1}  \equiv 0\pmod{p^2},
\]
which was discovered
 earlier (by E. Waring \cite{War1782} %, p.\ 383)
in 1782 and again by C.\ Babbage \cite{Bab1819} in 1819), is in fact equivalent to  
Wolstenholme's result.

In the following 150 years, Wolstenholme's
 congruence has been generalized
in many directions (see  Me\v strovi\' c \cite{Mes112} for a survey).
This paper considers  generalizations in  two directions.
%This paper  considers generalizations in two directions.
The first direction treats a larger set of binomial
coefficients, replacing $2p-1$ with $kp-1$.  In 1900 Glaisher \cite{Gla1900c}
showed that for all integers $k\geq 2$,
\begin{equation}
\label{glaisher}
{ kp-1 \choose p-1} \equiv 1 \pmod{p^3}
\end{equation}
holds for all $p \geq 5$. 
In 1999 Andrews \cite{And99} extended Glaisher's 
theorem to $q$-binomial coefficients.
% were given by Andrews \cite{And99}.

The second direction obtains congruences modulo 
 higher powers of $p$, by adding extra terms to the right hand side
 of Wolstenholme's congruence.
In 2000  van Hamme \cite{vH00} proved a result implying 
 that for all primes $p \geq 7$, 
\begin{equation}
\label{vanhamme}
{2p-1\choose p-1}\equiv 1+2p\sum_{j=1}^{p-1}\frac{1}{j} \pmod{p^5},
\end{equation}
where $H_n:=\sum_{j=1}^n\frac{1}{j}$ are the harmonic numbers.
Recently Me\v strovi\' c \cite{Mes11} showed that for any prime $p\geq 11$,
\begin{equation}
\label{mestrovic}
{2p-1\choose p-1}\equiv 1-2p\sum_{j=1}^{p-1}\frac{1}{j}+4p^{2}\sum_{1\leq i<j\leq p-1}\frac{1}{ij}\pmod{p^{7}}
\end{equation}
This congruence involves the  additional expression
\[
\sum_{1\leq i<j\leq p-1}\frac{1}{ij},
\]
which is an example of a multiple harmonic sum, defined below.

The main result  of this paper is a simultaneous generalization and unification of these results,
 giving congruences for ${kp-1\choose p-1}$ to arbitrary powers of $p$, which involve multiple harmonic sums. 
Our basic method formulates and exploits the existence of families of
linear relations between certain multiple harmonic sums.
The coefficients in our congruences are given
by certain polynomials $b_{j,n}(T)$, defined below.

%********************************
%
%  Subsection 1.1
%
%********************************

\subsection{Main result}\label{sec11}

A  \emph{composition} is a finite ordered list $\{\lambda_{1},\ldots,\lambda_{j}\}$ of positive integers. For ease of notation, we will denote by $\{\lambda_{1},\ldots,\lambda_{j}\}^a$ the composition
\[
\ul=\{\underbrace{\lambda_1,\ldots,\lambda_j},\underbrace{\lambda_1,\ldots,\lambda_j},\ldots,\underbrace{\lambda_1,\ldots,\lambda_j}\}
\]
consisting of $a$ concatenated copies of $\{\lambda_1,\ldots,\lambda_j\}$.

%******************
% Definition 1.1
%*****************
\begin{definition}
{\em
For a composition $\ul=\{\lambda_1,\ldots,\lambda_j\}$, and a positive integer $n$, we define the \emph{multiple harmonic sum}
\[
\H(\ul;n) :=\sum_{n\geq i_{1}>\ldots>i_{j}\geq 1}\frac{1}{i_{1}^{\lambda_1}\cdot\ldots\cdot i_{j}^{\lambda_j}}.
\]
(By convention, if $\ul=\{\lambda_1,\ldots,\lambda_j\}$ and $n<j$, we set $\H(\ul;n)=0$)
}
\end{definition}

For $k$ an integer, it has long been known (see e.g.\ \cite{Leh38}) that the binomial coefficient ${kp-1\choose p-1}$ can be written as a linear combination of `elementary symmetric' multiple harmonic sums $\H(\{1\}^j;p-1)$:
\[
{kp-1\choose p-1}=\sum_{j=0}^{p-1}(k-1)^jp^j\H(\{1\}^j;p-1)
\]
For a fixed non-negative integer $n$, we may truncate this equation after the $2n$-th term and use estimates on the $p$-divisibility of the multiple harmonic sums $\H(\{1\}^j;p-1)$ to obtain the congruence
\begin{equation}
\label{easycong}
{kp-1\choose p-1}\equiv\sum_{j=0}^{2n}(k-1)^jp^j\H(\{1\}^j;p-1)\pmod{p^{2n+3}}
\end{equation}
which holds for all primes $p\geq 2n+5$.

In Section 2 we show that the generating function for the elementary symmetric multiple harmonic sums $\H(\{1\}^j;p-1)$ (with $p$ fixed) satisfies a functional equation, which we use to derive identities involving these sums.  These identities can be used to cancel some of the terms appearing in \eqref{easycong}. Our main result (like equation \eqref{easycong}) gives for each non-negative integer $n$ a congruence for the binomial coefficient ${kp-1\choose p-1}$ mod $p^{2n+3}$, involving multiple harmonic sums.  However, our congruence involves only the first $n$ elementary symmetric multiple harmonic sums (instead of the first $2n$ such sums).

The coefficients in our congruences are given by polynomials in $k$.  We make the following definition.

%******************
% Definition 1.2
%*****************
\begin{definition}
{\em
Let $0\leq j\leq n$ be integers.  We define the \emph{\optimal polynomial} $b_{j,n}(T)\in\Q[T]$ to be the 
unique polynomial of degree at most $2n+1$ satisfying the following conditions:
\begin{enumerate}
%\item $\deg(b_{j,n}(T))\leq 2n+1$
%\smallskip
\item[(C1)] $b_{j,n}(T)\equiv(T-1)^j\mod (T-1)^{n+1}$
\smallskip
\item[(C2)]
 $b_{j,n}(T)\equiv(-1)^jT^j\mod T^{n+1}$
\end{enumerate}
}
\end{definition}

A table of the \optimal  polynomials $b_{j,n}(T)$ for $0\leq j\leq n\leq 3$ can  be found in Section \ref{sec13}.
Now we can state our main result:
%******************
% Theorem 1.3
%*****************

\begin{theorem} \label{th-main}{\rm (Optimized Congruences)}
%{\em (Optimal Wolstenholme Congruences)}
Let $n \geq 0$ be a fixed integer. The \optimal polynomials $b_{j,n}(T)$ ($j=0,1,\ldots,n$) have integer coefficients, and the following hold:
\begin{enumerate}
\item For every prime $p \ge 2n+5$ and every integer $k\ge 1$:
\begin{equation}
\label{MTeq}
{kp -1 \choose p-1} \equiv\sum_{j=1}^n b_{j,n}(k) \,p^j\H( \{ 1\}^j; p-1)\pmod{p^{2n+3}}.
\end{equation}

\item If $p=2n+3$ is prime, then for every integer $k \ge 1$,  the above congruence 
holds (\rm{mod} ${p^{2n+2}}$).

\item For every prime $3 \le p\le 2n+1$ and every integer $k \ge 1$, the above congruence is equality: 
\[
{kp -1 \choose p-1} = \sum_{j=1}^n b_{j,n}(k) \,p^j\H( \{ 1\}^j; p-1).
\]

\end{enumerate}
\end{theorem}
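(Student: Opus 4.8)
The plan is to play the elementary expansion $\binom{kp-1}{p-1}=\sum_{j=0}^{p-1}(k-1)^j\,p^j\H(\{1\}^j;p-1)$ against a second, complementary expansion coming from the functional equation of Section~2, and then to interpolate between the two. Write $a_j:=p^j\H(\{1\}^j;p-1)\in\Z_{(p)}$ and $Q(x):=\binom{x+p-1}{p-1}=\prod_{i=1}^{p-1}(1+x/i)=\sum_{j=0}^{p-1}\H(\{1\}^j;p-1)\,x^j$, so the expansion above reads $\binom{kp-1}{p-1}=Q((k-1)p)=\sum_j(k-1)^ja_j$. For $p$ odd the reindexing $i\mapsto p-i$ gives $Q(-p-x)=(-1)^{p-1}Q(x)=Q(x)$; evaluating at $x=(k-1)p$ turns this into the second identity $\binom{kp-1}{p-1}=Q(-kp)=\sum_{j=0}^{p-1}(-1)^jk^ja_j$. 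The point is that the $a_j$-coefficient is $(T-1)^j$ in the first identity and $(-1)^jT^j$ in the second, so (C1) and (C2) say precisely that $b_{j,n}(T)$ agrees with the first expansion to order $n$ at $T=1$ and with the second to order $n$ at $T=0$. Since (C1)--(C2) force $b_{0,n}\equiv1$, the $j=0$ term supplies the constant $\binom{p-1}{p-1}=1$, and it suffices to prove the polynomial congruence $\sum_{j=0}^n b_{j,n}(T)\,a_j\equiv\binom{Tp-1}{p-1}\pmod{p^{2n+3}}$.

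For existence, uniqueness and integrality of the $b_{j,n}$, I would use that $T^{n+1}$ and $(T-1)^{n+1}$ are coprime with resultant a unit, so there is a B\'ezout identity $u(T)T^{n+1}+v(T)(T-1)^{n+1}=1$ with $u,v\in\Z[T]$; the Chinese Remainder solution of (C1)--(C2) of degree $\le2n+1$ is then manifestly in $\Z[T]$. Now set $B(T):=\sum_{j=0}^n b_{j,n}(T)a_j$ and $\Delta(T):=B(T)-\sum_{j=0}^{2n}(T-1)^ja_j$. By (C1) each $b_{j,n}(T)-(T-1)^j$ is divisible by $(T-1)^{n+1}$, and $(T-1)^j$ is divisible by $(T-1)^{n+1}$ for $j\ge n+1$, so $\Delta$ is divisible by $(T-1)^{n+1}$ over $\Z_{(p)}$.

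For the other side I invoke the divisibility estimates behind \eqref{easycong}: for $p\ge2n+5$ one has $a_j\equiv0\pmod{p^{2n+3}}$ whenever $j\ge2n+1$. Hence the two truncated identities agree modulo $p^{2n+3}$, so $\Delta\equiv B(T)-\sum_{j=0}^{2n}(-1)^jT^ja_j\pmod{p^{2n+3}}$, and (C2) makes the right-hand side divisible by $T^{n+1}$. Thus over $R:=\Z_{(p)}/p^{2n+3}$ the reduction $\bar\Delta$ is divisible by both monic polynomials $(T-1)^{n+1}$ and $T^{n+1}$; the integer B\'ezout relation forces divisibility by their product $T^{n+1}(T-1)^{n+1}$, of degree $2n+2$. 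Since $\deg\Delta\le2n+1$ and the divisor is monic, $\bar\Delta=0$, i.e.\ $\Delta\equiv0\pmod{p^{2n+3}}$; evaluating at $T=k$ gives part (1).

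The two boundary cases come out of the same machine. For part (3), when $p-1\le2n$ both identities are exact finite sums in $a_0,\dots,a_{p-1}$ and are literally equal, so $\Delta$ is divisible by $(T-1)^{n+1}$ and by $T^{n+1}$ exactly over $\Z_{(p)}$, and the degree argument forces $\Delta=0$ identically, i.e.\ equality. For part (2), $p=2n+3$, the estimates degrade by exactly one power at the top: $v_p(a_{p-2})=v_p(a_{p-1})=2n+2$ (for instance $\H(\{1\}^{p-2};p-1)=\tfrac1{(p-1)!}\binom{p}{2}$ has $v_p=1$), so the truncation, and hence the whole argument, descends only to modulus $p^{2n+2}$. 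I expect the main obstacle to be precisely this boundary bookkeeping, together with checking that the two-sided divisibility step is legitimate over the non-domain ring $\Z_{(p)}/p^{2n+3}$ --- which is exactly what the integrality of the B\'ezout coefficients (equivalently, the resultant being a unit) is there to guarantee.
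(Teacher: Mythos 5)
Your proposal is correct, but it takes a genuinely different route from the paper's. The paper never argues directly with the polynomials $b_{j,n}(T)$: it first builds the affine family $\cF_{2n,k}$ of generalized Wolstenholme congruences (Proposition \ref{thrm} combined with the truncation estimates of Propositions \ref{scc} and \ref{propextra}, assembled case-by-case in Theorem \ref{gwc}), then shows by integral linear algebra (Lemmas \ref{bindet} and \ref{iso}: $\hat{V}_{2n}$ is free of rank $n$ and projects isomorphically onto the last $n$ coordinates) that exactly one member of the family has $b_{n+1}=\cdots=b_{2n}=0$ (Theorem \ref{MT}), and only afterwards identifies the resulting coefficients with the polynomials of Definition 1.2 via the matrix formula of Theorem \ref{recipe} and the CRT characterization of Proposition \ref{congforb}. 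You run this backwards: taking conditions (C1)--(C2) as the definition, you get existence, uniqueness and integrality from the unit resultant of $T^{n+1}$ and $(T-1)^{n+1}$ (an explicit integer B\'ezout identity comes from splitting $1=(T+(1-T))^{2n+1}$), and you prove the congruence by showing the difference polynomial $\Delta(T)$, of degree $\le 2n+1$, is divisible modulo $p^{2n+3}$ by both $(T-1)^{n+1}$ and $T^{n+1}$, hence by their monic product of degree $2n+2$, hence vanishes; you correctly flag that the product-divisibility step over the non-domain $\Z/p^{2n+3}$ is exactly what the integer B\'ezout relation licenses, and the monicity of the divisor makes the degree argument valid there. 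The two essential inputs are the same as the paper's: the reflection $Q(-p-x)=(-1)^{p-1}Q(x)$ is the paper's functional equation $f_{p-1}(-1-T)=f_{p-1}(T)$ in disguise, and your valuation bookkeeping ($v_p\bigl(p^j\H(\{1\}^j;p-1)\bigr)\ge 2n+3$ for $j\ge 2n+1$ when $p\ge 2n+5$, dropping to exactly $2n+2$ at $j=p-2,p-1$ when $p=2n+3$, and exactness by empty sums when $p\le 2n+1$) reproduces Propositions \ref{scc} and \ref{propextra} and matches the paper's cases (ii)--(iv) of Theorem \ref{gwc}. What your route buys is brevity and self-containment: no general family, no rank computation, and integrality for free from CRT over $\Z[T]$. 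What it gives up is what the paper needs elsewhere: the family $\cF_{2n,k}$ itself, the uniqueness of the optimized member within it, and explicit access to the boundary coefficients $b_{2n+1},b_{2n+2}$, which drive the exceptional-congruence analysis of Theorem \ref{th-extra}; none of that is required for the statement at hand. One cosmetic point: as printed, \eqref{MTeq} starts the sum at $j=1$ and omits the constant term $b_{0,n}(k)\H(\{1\}^0;p-1)=1$; like the paper's own worked examples, you rightly treat the $j=0$ term as present.
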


Wolstenholme's congruence is the case  $n=0, k=2$ of Theorem \ref{th-main}.
As another  example, taking $n=k=3$  gives the congruence
\begin{eqnarray*}
{3p-1\choose p-1}\equiv &1&+402\,p\H(\{1\};p-1)-396\,p^2\H(\{1\}^2;p-1)\\
&+&216\,p^3\H(\{1\}^3;p-1)\pmod{p^9}.
\end{eqnarray*}
\smallskip

Theorem \ref{th-main}  has four important features:
\begin{enumerate}
\item The coefficients $b_{j,n}(k)$ appearing in the congruence \eqref{MTeq} are independent of the prime $p$. 

\item There are a large number of congruences for ${kp-1\choose p-1}$ holding mod $p^{2n+3}$, which involve the multiple harmonic sums $\H(\{1\}^j;p-1)$ for $1\leq j\leq 2n$ (see Theorem \ref{gwc}).  The congruences \eqref{MTeq} are optimized 
among these in only containing  the terms $\H(\{1\}^j;p-1)$ for $1\leq j\leq n$.

\item The restriction of the theorem to exclude certain small primes, depending on $n$, is necessary. 
 The congruences may fail to hold $(\bmod ~p^{2n+3})$ for
 $p=2n+3$ (when $2n+3$ is prime),
 and also fail to hold for $p=2$.

\item The  \optimal polynomials $b_{j,n}(T)$ depend on $n$, and 
for fixed $j$ their values at integers
$b_{j, n}(k)$, which are the coefficients in the congruences, 
do {\em not} stabilize as $n\to\infty$ (with the exception of $b_{0,n}(k)$; see the tables in Section \ref{sec13}). However they do satisfy many interesting congruences as $n$ varies, which we address in Section \ref{sec5}.
\end{enumerate}

One may ask whether the  coefficients $b_{j,n}(k)$ appearing in the \optimal congruences \eqref{MTeq} are uniquely characterized by \eqref{MTeq} holding for all sufficiently large primes $p$; we  conjecture this is the case,
and discuss it  in  Section \ref{sec12}.

An {\em exceptional congruence}  will be a triple $(k,n, p)$  such that the corresponding
congruence 
given in Theorem \ref{th-main} holds modulo an extra power of $p$.
  We  characterize exceptional
 congruences  for primes $p \ge 2n+3$ as follows.

%*****************
%Theorem 1.4
%*****************
\begin{theorem} {\em (Exceptional Congruences)}
\label{th-extra}
Let $n$ be a non-negative integer, $p$ an odd prime. 
%Let $b_{j,n}(T)$ be the \optimal polynomials appearing in Theorem \ref{th-main}. 
For all $k\in\Z$, we have the following:
\begin{enumerate}[(i)]
\item Suppose $p\geq 2n+5$.  The exceptional congruence
\[
{kp-1\choose p-1}\equiv \sum_{j=0}^nb_{j,n}(k)p^j\H(\{1\}^j;p-1)\pmod{p^{2n+4}}
\]
holds if and only if either $k \equiv 0, 1 ~(\bmod ~p)$
or $p$ divides the numerator of the Bernoulli number $B_{p-2n-3}$.
% or $k\equiv 0,1\pmod{p}$.

\item  Suppose $p=2n+3$.  The exceptional congruence
\[
{kp-1\choose p-1}\equiv \sum_{j=0}^nb_{j,n}(k)p^j\H(\{1\}^j;p-1)\pmod{p^{2n+3}}
\]
holds if and only if $k\equiv 0,1\pmod{p}$.
\end{enumerate}
\end{theorem}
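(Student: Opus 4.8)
The plan is to pin down the exact error term of the optimized congruence modulo one further power of $p$. Write
\[
E(k) := \binom{kp-1}{p-1} - \sum_{j=0}^{n} b_{j,n}(k)\,p^{j}\H(\{1\}^{j};p-1),
\]
and set $T_{j} := p^{j}\H(\{1\}^{j};p-1)$, so that $E(k) = \binom{kp-1}{p-1} - \sum_{j=0}^{n} b_{j,n}(k) T_{j}$. Theorem \ref{th-main} says $v_{p}(E(k)) \ge 2n+3$ for $p \ge 2n+5$ and $v_{p}(E(k)) \ge 2n+2$ for $p = 2n+3$; in each case the exceptional congruence asserts that $v_{p}(E(k))$ exceeds this by one, so I must compute $E(k)$ modulo the next power of $p$ and read off when it vanishes. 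The decisive structural input is a factorization of $E$ as a polynomial in $k$.

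First I would establish that, as an identity in $\Z_{(p)}[k]$,
\[
E(k) = k^{\,n+1}(k-1)^{\,n+1}\,G(k)
\]
for some $G(k) \in \Z_{(p)}[k]$. This comes from using two expansions of the binomial coefficient at once. The identity \eqref{easycong} gives $\binom{kp-1}{p-1} = \sum_{j\ge 0}(k-1)^{j}T_{j}$, while the functional equation of the generating function $\prod_{i=1}^{p-1}(1+x/i)$ from Section 2 gives the companion expansion $\binom{kp-1}{p-1} = \sum_{j\ge 0}(-k)^{j}T_{j}$, reflecting the symmetry $k \leftrightarrow 1-k$ of the binomial coefficient. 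Subtracting the optimized sum from the first expansion, condition (C1) shows each coefficient $(k-1)^{j}-b_{j,n}(k)$ (for $j\le n$) is divisible by $(k-1)^{n+1}$, while the tail terms $(k-1)^{j}$ $(j>n)$ are visibly so; hence $(k-1)^{n+1}\mid E(k)$. Subtracting instead from the companion expansion and invoking (C2) gives $k^{n+1}\mid E(k)$. As $k$ and $k-1$ are coprime, the factorization follows; one also gets $G(1-k)=G(k)$ from the symmetry $b_{j,n}(1-T)=b_{j,n}(T)$, itself forced by interchanging (C1) and (C2) under $T\mapsto 1-T$ together with the uniqueness of $b_{j,n}$.

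Next I would show that $G(k)$ is, modulo the relevant power of $p$, a constant equal to an explicit unit multiple of the Bernoulli number. The key is a degree count: since $v_{p}(T_{j}) \ge j+1$ for even $j$ and $\ge j+2$ for odd $j$, every coefficient of $k^{d}$ in $E(k)$ with $d \ge 2n+3$ is $\equiv 0 \pmod{p^{2n+4}}$, so $E(k)$ reduces modulo $p^{2n+4}$ to a polynomial of degree at most $2n+2$. Because $k^{n+1}(k-1)^{n+1}$ is monic of degree $2n+2$, the factorization forces $E(k) \equiv c\,k^{n+1}(k-1)^{n+1}\pmod{p^{2n+4}}$, where $c$ is the coefficient of $k^{2n+2}$; the same estimates identify this coefficient with $T_{2n+2}\pmod{p^{2n+4}}$ (the polynomials $b_{j,n}$ have degree $\le 2n+1$ and contribute nothing). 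Finally Newton's identities give $\H(\{1\}^{2n+2};p-1) \equiv -\tfrac{1}{2n+2}\sum_{a=1}^{p-1}a^{-(2n+2)}\pmod{p^{2}}$, and Glaisher's congruence $\sum_{a=1}^{p-1}a^{-(2n+2)} \equiv \tfrac{2n+2}{2n+3}\,p\,B_{p-2n-3}\pmod{p^{2}}$ yields $c \equiv -\tfrac{1}{2n+3}\,p^{2n+3}B_{p-2n-3}\pmod{p^{2n+4}}$. Since $\tfrac{1}{2n+3}$ is a $p$-adic unit when $p\ge 2n+5$, dividing by $p^{2n+3}$ shows the exceptional congruence of part (i) holds if and only if $p\mid B_{p-2n-3}$ or $k^{n+1}(k-1)^{n+1}\equiv 0$, that is, $k\equiv 0,1\pmod p$.

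For part (ii), $p=2n+3$, the same factorization holds, but now $\H(\{1\}^{j};p-1)=0$ for $j>p-1=2n+2$, so $E(k)$ is genuinely a polynomial of degree $\le 2n+2$ and the factorization becomes the exact identity $E(k)=c\,k^{n+1}(k-1)^{n+1}$ with $c$ the leading coefficient of $\binom{kp-1}{p-1}$, namely $c=p^{\,p-1}/(p-1)!$. By Wilson's theorem $c/p^{2n+2}\equiv -1\pmod p$ is a unit and no Bernoulli number intervenes, so $v_{p}(E(k))\ge 2n+3$ if and only if $k\equiv 0,1\pmod p$. The main obstacle in the whole argument is the factorization step: one must set up the two expansions and verify that (C1) and (C2) are precisely the divisibility conditions needed. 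Once that is in hand, the degree count and the classical power-sum congruences finish the proof with little further effort.
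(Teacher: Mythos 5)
Your argument is correct and arrives at exactly the error formulas the paper derives, namely $E(k,n,p)\equiv \frac{-B_{p-2n-3}}{2n+3}\,k^{n+1}(k-1)^{n+1}p^{2n+3}\pmod{p^{2n+4}}$ for $p\geq 2n+5$ and $E(k,n,p)\equiv -k^{n+1}(k-1)^{n+1}p^{2n+2}\pmod{p^{2n+3}}$ for $p=2n+3$, but by a genuinely different route. The paper extends the optimal data to $N=2n+2$, introduces the auxiliary coefficients $b_{2n+1},b_{2n+2}$ (which depend on the non-unique choice of the $c_i$), proves by a $\Z$-torsor argument that the combination $C_n(k)=(n+1)b_{2n+1}+b_{2n+2}$ is independent of that choice, identifies $C_n(k)=k^{n+1}(k-1)^{n+1}$ through a monicity-plus-divisibility argument (using the matrix recipe of Theorem \ref{recipe} and the symmetry $V_{2n+2,k}=V_{2n+2,1-k}$), and then feeds $b_{2n+1},b_{2n+2}$ into the case analysis of Theorem \ref{gwc}, which requires Zhao's congruences for both $\H(\{1\}^{2n+1})$ (odd index) and $\H(\{1\}^{2n+2})$ (even index). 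You instead factor the error polynomial directly: conditions (C1) and (C2), applied to the two expansions ${kp-1\choose p-1}=\sum_j(k-1)^jT_j=\sum_j(-k)^jT_j$ with $T_j:=p^j\H(\{1\}^j;p-1)$ (the second being the functional equation $f_{p-1}(T)=f_{p-1}(-1-T)$, valid since $p-1$ is even), give $k^{n+1}(k-1)^{n+1}\mid E(k)$ in $\Z_{(p)}[k]$, and the triangularity of multiplication by the monic polynomial $k^{n+1}(k-1)^{n+1}$ converts your coefficient estimates into $E(k)\equiv c\,k^{n+1}(k-1)^{n+1}\pmod{p^{2n+4}}$ with $c\equiv T_{2n+2}$. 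This bypasses the auxiliary coefficients and the well-definedness lemma entirely, needs only the even-index Bernoulli congruence (your Newton-identity-plus-Glaisher derivation is equivalent to citing Proposition \ref{scc} at $j=2n+2$), and in part (ii) yields the exact identity $E(k)=\frac{p^{p-1}}{(p-1)!}\,k^{n+1}(k-1)^{n+1}$, which is strictly stronger than the paper's congruence modulo $p^{2n+3}$; the trade-off is that the paper's route reuses machinery (Theorem \ref{gwc} and the $V_{N,k}$ formalism) already built for the main theorem.

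One small patch is needed: your blanket bounds $v_p(T_j)\geq j+1$ for even $j$ and $v_p(T_j)\geq j+2$ for odd $j$ fail at the two boundary indices not covered by Proposition \ref{scc}. By Proposition \ref{propextra} one has $v_p(T_{p-2})=p-1=j+1$ (with $j=p-2$ odd) and $v_p(T_{p-1})=p-1=j$ (with $j=p-1$ even). This costs you nothing, because when $p\geq 2n+5$ these indices satisfy $j\geq 2n+3$ and $v_p(T_j)\geq p-1\geq 2n+4$, which is all your degree count actually uses; but the cases $j=p-2$ and $j=p-1$ should be handled by citing Proposition \ref{propextra} separately rather than folded into the general estimate.
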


We obtain Theorem \ref{th-main} and Theorem \ref{th-extra} as special instances of a family 
$\cF_{N,k}$ of generalized Wolstenholme congruences,  given in Theorem \ref{gwc}, and with
the $\cF_{N, k}$ specified in Definition \ref{de34}.
The general congruence in the family $\cF_{N,k}$ 
(which will hold for all sufficiently large primes $p$) is of the form
\begin{equation}
\label{genfam}
{kp-1\choose p-1}\equiv\sum_{j=0}^{N}b_jp^j\H(\{1\}^j;p-1)\pmod{p^{N+1+\epsilon}}
\end{equation}
where $\epsilon\in\{1,2\}$ is chosen so that $\epsilon\equiv N\pmod{2}$, and the coefficients $b_j$ are rational numbers.
Each congruence in this general family is derived from \eqref{easycong}, using 
linear combinations of identities among multiple harmonic sums (these identities are  stated as Theorem \ref{thrm}).
The optimized congruence \eqref{MTeq} is distinguished as the unique congruence
 in the family $\cF_{2n,k}$ satisfying $b_{n+1}=b_{n+2}=\ldots=b_{2n}=0$.

%********************************
%
%  Subsection 1.2
%
%********************************
\subsection{The \optimal polynomials $b_{j,n}(T)$}\label{sec13}

In Section \ref{sec5} we prove some interesting properties of the \optimal polynomials $b_{j,n}(T)$.
Here we  present  data on these polynomials for small $j, n$ in Table 1 below.

%\begin{table}[h]
%\vspace{-.3cm}
%\begin{tabular}{|c||c|c|c|c|c|c|}
%\hline $j\backslash n$&0&1&2&3\\ \hline\hline
%0&1&1&1&1\\ \hline
%1&0&$T^2-T$&$-T^4+2T^3-T$&$2T^6-6T^5+5T^4-T+1$\\ \hline
%2&0&0&$T^4-2T^3+T^2$&$-2T^6+6T^5-5T^4+T^2$\\ \hline
%3&0&0&0&$T^6-3T^5+3T^4-T^3$\\ \hline
%\end{tabular}
%\caption{Optimal Polynomials    $b_{j,n}(T)$ }
%\end{table}

\begin{table}[h]
\label{tab1}
\caption{\Optimal Polynomials  $b_{j,n}(T)$ }
\centering
\begin{tabular}{|c||c|c|c|c|c|c|}
\hline $n\backslash j$&0&1&2&3\\ \hline\hline
0&~1~&&&\\ \hline
1&~1~&$T^2-T$&&\\ \hline
2&~1~&$-T^4+2T^3-T$&$T^4-2T^3+T^2$&\\ \hline
3&~1~&$2T^6-6T^5+5T^4$&$-2T^6+6T^5-5T^4$& $T^6-3T^5+3T^4$\\
~&~&$~~~~~~~ {}{}-T+1$&  $~~~~~~~~~~~~~~ +T^2$&$~~~~~~~ -T^3$ \\ \hline
\end{tabular}

\end{table}

 Table 1 illustrates that $b_{0,n}(T)=1$ for all $n$ (this will be established in Section \ref{sec5}).  
While  the definition of $b_{j,n}(T)$ given earlier shows that it has  degree at most $2n+1$, in fact its
degree is at most $2n$ (see Theorem \ref{recipe}).

We next consider the  coefficients $b_{j,n}(k)$ appearing in the  \optimal congruences given in Theorem \ref{th-main}.  Values of the coefficients for $k=2$ and $k=3$ are given in  Table 2 (resp.\ Table 3)  below.\bigskip

\begin{table}[h]
\caption{Values $b_{j,n}(k)$ for $k=2$}
\centering
\begin{tabular}{|c||c|c|c|c|c|c|}
\hline $n\backslash j$&0&1&2&3&4&5\\ \hline\hline
0&1&&&&&\\ \hline
1&1&2&&&&\\ \hline
2&1&-2&4&&&\\ \hline
3&1&14&-12&8&&\\ \hline
4&1&-66&68&-40&16&\\ \hline
5&1&382&-380&248&-112&32\\ \hline
\end{tabular}

\end{table}

\bigskip
Table 2 shows that  $b_{1,2}(2)=-2$, $b_{2,2}(2)=4$, so that Theorem \ref{th-main} reduces to
Me\v strovi\' c's  result \eqref{mestrovic} in the case $n=k=2$.

\begin{table}[h]
\label{tab3}
\caption{Values $b_{j,n}(k)$ for $k=3$}
\centering
\begin{tabular}{|c||c|c|c|c|c|c|}
\hline $n\backslash j$&0&1&2&3&4&5\\ \hline\hline
0&~1~&&&&&\\ \hline
1&~1~&6&&&&\\ \hline
2&~1~&-30&36&&&\\ \hline
3&~1~&402&-396&216&&\\ \hline
4&~1~&-6078&6084&-3672&1296&\\ \hline
5&~1~&102786&-102780&66312&-29808&7776\\ \hline
\end{tabular}
\end{table}

Tables 2 and 3  illustrate that for $j \ge 1$, the  
coefficients $b_{j,n}(k)$ which appear in the congruences \eqref{MTeq}
do not appear to stabilize as $n \to \infty$ (holding $j$ and $k$ fixed).

%********************************
%
%  Subsection 1.3
%
%********************************

\subsection{Uniqueness conjecture}\label{sec12}

%There remains an  issue concerning the general congruence 
 The statement of Theorem \ref{th-main} raises an issue concerning  whether 
 the coefficients $b_{j,n}(k)$ above are uniquely determined 
 by the condition that the congruences  \eqref{MTeq}
 hold for all sufficiently large primes $p$.  Theorem \ref{MT} asserts that there is a unique congruence of the form \eqref{MTeq} in the general family $\cF_{N,k}$. 
We believe that our family $\cF_{N,k}$ actually contains {\em all} congruences of the shape 
\eqref{genfam} which hold for all sufficiently large primes $p$, in which case 
the individual coefficients  $b_{j,n}(k)$ above would be uniquely determined by \eqref{MTeq}, 
but we do not establish this.
We formulate this belief as  the following conjecture.

%******************
% Conjecture 1.5
%*****************

\begin{conjecture}\label{cj-main} {\rm (Uniqueness Conjecture)}

Let $n\geq0$, $k$ be integers, $b_0,b_1,\ldots,b_n,\in\Q$ such that the congruence
\[
{kp-1\choose p-1}\equiv \sum_{j=0}^nb_jp^j\H(\{1\}^j;p-1)\pmod{p^{2n+2}}
\]
holds for all sufficiently large primes $p$. Then $b_j=b_{j,n}(k)$ for $j=0,1,\ldots,n$.
\end{conjecture}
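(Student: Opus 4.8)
The plan is to recast the conjecture as a linear-independence statement and attack it through the known Bernoulli-number expansions of the sums $\H(\{1\}^j;p-1)$. First I would subtract the optimized congruence of Theorem \ref{th-main} (which holds modulo $p^{2n+3}$, hence \emph{a fortiori} modulo $p^{2n+2}$) from the hypothesized one. Writing $c_j := b_j - b_{j,n}(k)\in\Q$, the conjecture becomes the implication: if
\[
\sum_{j=0}^n c_j\, p^j\, \H(\{1\}^j;p-1) \equiv 0 \pmod{p^{2n+2}}
\]
for all sufficiently large primes $p$, then $c_0=\cdots=c_n=0$. Equivalently, one must show the quantities $p^j\H(\{1\}^j;p-1)$ for $0\le j\le n$ are linearly independent over $\Q$ in the strong sense that no fixed nontrivial rational combination of them vanishes modulo $p^{2n+2}$ for all large $p$.

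Second, I would make the $p$-adic behavior of each term explicit. Applying the Newton--Girard identities to pass from the elementary symmetric sums $\H(\{1\}^j;p-1)$ to the power sums $\H(\{m\};p-1)=\sum_{a=1}^{p-1}a^{-m}$, and then inserting the classical congruences expressing these power sums in terms of Bernoulli numbers modulo small powers of $p$, one finds $v_p\bigl(p^j\H(\{1\}^j;p-1)\bigr)\ge j+1$ for $j$ even and $\ge j+2$ for $j$ odd. Crucially, the terms pair up: for each $m\ge 1$ the two terms $p^{2m-1}\H(\{1\}^{2m-1};p-1)$ and $p^{2m}\H(\{1\}^{2m};p-1)$ first contribute at the same $p$-power $p^{2m+1}$, each with leading coefficient a fixed ($p$-independent) rational multiple of the same Bernoulli number $B_{p-2m-1}$.

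Third, I would peel off the coefficients level by level. Reducing modulo $p$ gives $c_0\equiv 0$, hence $c_0=0$. At each Bernoulli level, the vanishing of the coefficient of $p^{2m+1}B_{p-2m-1}$ yields a single fixed linear relation $\Lambda_m(c_{2m-1},c_{2m})=0$, provided one can choose infinitely many primes $p$ with $B_{p-2m-1}\not\equiv 0\pmod p$. Since each pair contributes only one such relation, separating $c_{2m-1}$ from $c_{2m}$ forces a descent into the subleading terms of the Bernoulli expansions, which bring in the next Bernoulli numbers $B_{p-2m-3}$ and products thereof.

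The main obstacle lies exactly here, and it is arithmetic rather than algebraic. To run the descent one needs infinitely many primes $p$ at which all the relevant Bernoulli numbers $B_{p-3},B_{p-5},\ldots$ are simultaneously nonzero modulo $p$; the existence of such primes is tied to the distribution of irregular primes and is not available in the required strength. Moreover, even granting nonvanishing, separating the paired coefficients $c_{2m-1},c_{2m}$ requires the leading and subleading Bernoulli data attached to a fixed level to behave independently as $p$ varies---an arithmetic-independence statement for Bernoulli numbers modulo powers of $p$ that appears beyond current techniques. This is why I expect the uniqueness assertion to remain conjectural: the algebraic skeleton of the argument is clear, but the necessary control over Bernoulli numbers modulo $p$ across infinitely many primes is what is missing.
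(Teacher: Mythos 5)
You were asked to prove Conjecture \ref{cj-main}, which the paper itself leaves open: there is no proof in the paper to compare against, and your proposal rightly stops short of claiming one. What can be compared is your diagnosis of the obstruction, and it matches the paper's own evidence precisely. Proposition \ref{prop-imp} shows that already the case $n=k=1$ of the conjecture implies there are infinitely many primes $p$ with $p\nmid B_{p-3}$ --- an open problem --- via Glaisher's congruence $\H(\{1\};p-1)\equiv-\frac{B_{p-3}}{3}p^2\pmod{p^3}$. That is exactly the first instance of the Bernoulli-nonvanishing hypothesis your descent requires, so your ``main obstacle'' paragraph identifies the genuine sticking point rather than a technical inconvenience. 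Your structural setup is also consistent with the paper's results: by Zhao's congruences (Proposition \ref{scc}), the terms $p^{2m-1}\H(\{1\}^{2m-1};p-1)$ and $p^{2m}\H(\{1\}^{2m};p-1)$ do both first contribute at $p^{2m+1}$ with the same Bernoulli number $B_{p-2m-1}$, and the single relation $\Lambda_m$ you can extract per pair is the leading term of a known identity (Glaisher's congruence on $S_r(p)$, reflecting the functional-equation relations behind Theorem \ref{gwc}); this is exactly why the family $\cF_{2n,k}$ has dimension $n$ rather than $0$, and why the coefficients cannot be pinned down from these congruence estimates alone.

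One difference worth recording: the paper does not attempt your analytic descent at all. Instead it proves an unconditional uniqueness \emph{within} the family $\cF_{2n,k}$ (Theorem \ref{MT}: a unique member with $b_{n+1}=\cdots=b_{2n}=0$) and isolates the missing arithmetic input as the Strong Uniqueness Conjecture \ref{uniq} --- that every congruence of the given shape arises from Theorem \ref{gwc} --- showing in Proposition \ref{uniqimp} that this implies Conjecture \ref{cj-main}. Your approach makes explicit what that formulation hides: one would need simultaneous nonvanishing of the $B_{p-2m-1}\bmod p$ along infinitely many primes, together with an independence statement for the subleading Bernoulli data needed to separate $c_{2m-1}$ from $c_{2m}$, neither of which is within reach. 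So your verdict --- algebraic skeleton clear, arithmetic control over Bernoulli numbers missing --- is the correct assessment of the statement's status, and it agrees with, and somewhat sharpens, the paper's own explanation of why the conjecture is expected to be difficult.
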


This conjecture might be difficult to resolve, in view of the following consequence.

%******************
% Proposition 1.6
%*****************
\begin{proposition}
\label{prop-imp}
In the special case $n=k=1$, the Uniqueness Conjecture \ref{cj-main} implies that there are infinitely many primes $p$ such that  the 
numerator of the Bernoulli number $B_{p-3}$ is not divisible by $p$.
\end{proposition}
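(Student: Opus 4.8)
The plan is to specialize the Uniqueness Conjecture to $n=k=1$ and extract an arithmetic consequence by combining it with the contrapositive of the exceptional-congruence criterion from Theorem~\ref{th-extra}. First I would write out what the conjecture asserts in this case: from Table~1 we have $b_{0,1}(T)=1$ and $b_{1,1}(T)=T^2-T$, so $b_{1,1}(1)=0$, and the conjecture says that the \emph{only} pair $(b_0,b_1)\in\Q^2$ for which
\[
{p-1\choose p-1}\equiv b_0+b_1\,p\,\H(\{1\};p-1)\pmod{p^4}
\]
holds for all large primes $p$ is $(b_0,b_1)=(1,0)$. Since the left side is identically $1$, the content is that $b_0=1$ is forced and that no nonzero multiple of $p\,\H(\{1\};p-1)$ can be added while preserving the congruence mod $p^4$ for all large $p$.

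Next I would argue by contraposition. Suppose the desired divisibility conclusion fails, i.e.\ there are only finitely many primes $p$ with $p\nmid\operatorname{num}(B_{p-3})$; equivalently, for all sufficiently large $p$ we have $p\mid\operatorname{num}(B_{p-3})$. I then invoke Theorem~\ref{th-extra}(i) in the case $n=1$ (so $p-2n-3=p-5$... ), being careful with indices: for $n=1$ the exceptional modulus is $p^{2n+4}=p^6$ and the Bernoulli number is $B_{p-2n-3}=B_{p-5}$, so I must instead set up the argument at the level that produces $B_{p-3}$. The correct specialization is $n=0$: there Theorem~\ref{th-extra}(i) says that for $p\ge 5$ the congruence ${kp-1\choose p-1}\equiv b_{0,0}(k)\pmod{p^4}$ holds if and only if $k\equiv 0,1\pmod p$ or $p\mid\operatorname{num}(B_{p-3})$. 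Taking $k=1$ trivially gives the exceptional congruence (the binomial is $1$), so the informative direction is the refinement one power higher, which is where the hypothesis that $p\mid\operatorname{num}(B_{p-3})$ for all large $p$ would let me \emph{strengthen} the optimized congruence and thereby exhibit a second valid coefficient pair.

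Concretely, the key step is: if $p\mid\operatorname{num}(B_{p-3})$ for all large $p$, then by the exceptional-congruence analysis the mod-$p^4$ congruence for ${p-1\choose p-1}$ holds with an extra factor available, and consequently the term $p\,\H(\{1\};p-1)$ is itself $\equiv 0\pmod{p^4}$ (resp. congruent to a rational multiple of lower-order data) for all large $p$. This would mean the pair $(b_0,b_1)=(1,c)$ satisfies the displayed congruence for \emph{every} $c\in\Q$, contradicting the uniqueness asserted by Conjecture~\ref{cj-main}. Hence the conjecture forces infinitely many $p$ with $p\nmid\operatorname{num}(B_{p-3})$.

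The main obstacle, and the step requiring the most care, is pinning down the exact $p$-adic valuation of $p\,\H(\{1\};p-1)=p\,H_{p-1}$ and relating it precisely to $B_{p-3}$. By the standard congruence $H_{p-1}\equiv -\tfrac{1}{3}p^2 B_{p-3}\pmod{p^3}$ (a Glaisher-type refinement of Wolstenholme), one has $p\,H_{p-1}\equiv -\tfrac13 p^3 B_{p-3}\pmod{p^4}$, so $p\,H_{p-1}\equiv 0\pmod{p^4}$ exactly when $p\mid B_{p-3}$ (as a $p$-integral rational number, i.e.\ $p$ divides its numerator). I would therefore make this the technical lemma underpinning the argument: the added term vanishes mod $p^4$ precisely on the irregular-type primes, so uniqueness of the coefficient can fail only if such primes are cofinite. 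Verifying this valuation statement cleanly, and confirming it is the genuine obstruction to adding a second coefficient, is the crux; the rest is bookkeeping against Tables~1–3 and Theorem~\ref{th-extra}.
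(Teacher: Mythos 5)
Correct, and essentially the paper's own proof recast in contrapositive form: both arguments reduce to the two facts that $b_{1,1}(1)=0$ and that Glaisher's congruence $\H(\{1\};p-1)\equiv -\frac{B_{p-3}}{3}p^2\pmod{p^3}$ makes a nonzero coefficient $b_1$ survive modulo $p^4$ precisely for those primes with $p\nmid\operatorname{num}(B_{p-3})$, so a cofinite set of primes dividing $\operatorname{num}(B_{p-3})$ would produce a second valid coefficient pair $(1,c)$ and contradict Conjecture~\ref{cj-main}. Your detour through Theorem~\ref{th-extra} is superfluous (as you noticed, its criterion is vacuous for $k=1$ since $k\equiv 1\pmod{p}$), but you correctly abandon it and your final argument matches the paper's.
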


This  property of the Bernoulli numbers is currently an open problem. 

\begin{proof}
Take $n=k=1$, $b_0=b_1=1$.  These values do not agree with the values of the \optimal coefficients $b_{0,1}(1)=1$, $b_{1,1}(1)=0$, so the Uniqueness Conjecture 
%\ref{cj-main} 
states that there are infinitely many primes $p \ge 7$ for which
\[
1= {p-1\choose p-1}\not\equiv 1+p\H(\{1\};p-1)\pmod{p^4}.
\]
That is, there are infinitely many primes $p$ for which $p^3\nmid\H(\{1\};p-1)$. 
Glaisher \cite{Gla1900a} showed that, for all primes $p\geq5$, 
\[
\H(\{1\};p-1)\equiv -\frac{B_{p-3}}{3}p^2\pmod{p^3}.
\]
The result now follows.
\end{proof}

%We prove this result at the end of Section 4.
In Section 3 we formulate a more general Strong Uniqueness Conjecture \ref{uniq},
which we show in Section 4 implies the Uniqueness Conjecture.

%************************
%  Subsection 1.4
%************************
\subsection{Extensions of  results}\label{sec14} 

Theorem \ref{th-extra} connects certain exceptional  congruences with $p$-divisibility
of the numerators of certain Bernoulli numbers. 
Recall that theorems of Herbrand and Ribet say that for odd $i$ in the range $3\leq i\leq p-2$, $p|B_{p-i}$ if and only if a particular piece of the class group of the cyclotomic field $\Q(\zeta_p)$ has non-trivial $p$-part (see \cite{Was82}, Sec.\ 6.3 for a precise statement). This raises the possiibility that our congruences
may have an interpretation in terms of the arithmetic of cyclotomic fields. 
We do not currently know if that is the case, but 
in \cite{Ro12b} we will show that there is a parallel family of congruences related to $p$-adic $L$-function
values at positive integers, which involves the `power' multiple harmonic sums $H(\{ j\}; p-1)$.
In \cite{Ro12} we systematically investigate the structure of identities among
multiple harmonic numbers that underly such congruences. 

%************************
%  Subsection 1.5
%************************
\subsection{Related results}\label{sec15} 
The literature contains a vast
collection of identities and congruences involving multiple harmonic sums and related sums,
starting with work of Euler on harmonic numbers. Some of these involve
binomial coefficient congruences (see  Granville \cite{Gran95} for a survey).
Our generalized congruences  appear to have a structure not observed before, but for convenience
we summarize some results from the literature for comparison.
%There are also a vast number
%of congruences known for binomial coefficients, cf.
% have been extensively studied.  

%There have  been a 
A number of congruences  are known
for the elementary symmetric multiple harmonic sums $H( \{ 1\}^{r}; n)$ considered
in this paper.
In 1900 Glaisher \cite{Gla1900a} proved that 
 for all odd $r \geq5$ and all primes $p\geq7$,
\[
S_{r}(p) := \frac{pr}{2}\H(\{1\}^r;p-1)-\H(\{1\}^{r-1};p-1)\equiv 0\pmod{p^4},
\]
holds.  
In 1953 Carlitz \cite{Car53b} sharpened the congruence  of Glaisher 
to show for all odd $r \ge 5$ and prime $p \ge 7$,
\begin{eqnarray*}
S_{r}(p) \equiv
p^4\frac{(p-r)(p-r-1)(p-r-2)}{24(p-r-3)(p-1)!}B_{p-3}\pmod{p^5}, 
\end{eqnarray*}
giving a relation with Bernoulli numbers.  Along similar lines, Tauraso \cite{Tau10} shows
  that for any prime $p\geq7$,
\begin{eqnarray*}
\H(\{1\};p-1)\hspace{-.1cm}&\equiv&\hspace{-.1cm} -\frac{1}{2}p\H(\{1\}^2;p-1)-\frac{1}{6}p^2\H(\{1\}^3;p-1)\\
&\equiv&\hspace{-.1cm}p^2\left(\frac{B_{2p-5}}{3p-5}-3\frac{B_{2p-4}}{2p-4}+3\frac{B_{p-3}}{p-3}\right)+p^4\frac{B_{p-5}}{p-5}\hspace{-.2cm}\pmod{p^5}
\end{eqnarray*}
%These results illustrate nontrivial links between these multiple harmonic sums and Bernoulli
%numbers.

There are also many congruences known that  involve power sum multiple harmonic sums 
\[
H( \{ r\}, p-1) = \sum_{j=1}^{p-1} \frac{1}{j^r}.
\]
Washington \cite{Was98} provided a formula expressing 
these sums in terms of values of $p$-adic L-functions at positive integers.  
Congruences modulo $p$ involving arbitrary multiple harmonic sums have been investigated by Hoffman \cite{Hof04a}.  

Multiple harmonic sums appear in certain calculations in physics. 
 Bl\"{u}mlein (\cite{Blu03} discussed applications of multiple harmonic sums to quantum field theory.  
 He computed a family of algebraic relations between multiple harmonic sums $\H(\ul;n)$ 
 which are independent of the upper limit of summation $n$.
 The Hopf algebra of quasi-symmetric functions has also been used by Hoffman to investigate both multiuple harmonic sums \cite{Hof04} and multiple zeta values \cite{Hof97}.

 Kontsevich \cite{Kon02} considered the function
\[
H_n(x) = \sum_{j=1}^n \frac{x^j}{j},
\]
which is a `twisted' version of a multiple harmonic sum.  The expression $H_n(x)$ may also be considered as a trancated version of the series $-\log(1-x)$.
The corresponding sums for the dilogarithm $\sum_{j=1}^n \frac{x^j}{j^2}$
 were considered by Elbaz-Vincent and Gangl \cite{EVG02}, who
 determine functional equations satisfied by such series.
 These values are truncated forms of multiple zeta values, which  is of great current interest (see Zagier \cite{Zag94},
 Brown \cite{Bro12}).

%************************
%  Section 2
%************************
\section{Representing Binomial Coefficients in Terms of Multiple Harmonic Sums}

Our first object is to express binomial coefficients in terms of multiple harmonic sums.
For $n$ a positive integer, define the  polynomial
\[
f_{n}(T) ={(n+1)(T+1)-1\choose n}=\frac{1}{n!}\prod_{j=1}^n ((n+1)T+j)
\]
It will be useful to rewrite $f_n$ in the form
\[
f_n(T)=\prod_{j=1}^n\left(1+\frac{n+1}{j}T\right)
\]
Expanding the product above, we see that the coefficient of $T^j$ is $(n+1)^j$ times the $j$-th elementary symmetric function in $1,\frac{1}{2},\frac{1}{3},\ldots,\frac{1}{n}$.  In other words, we have
\begin{equation}
\label{harmf}
f_{n}(T) =\sum_{j=0}^n(n+1)^j\H(\{1\}^j;n)T^j
\end{equation}

By convention, we take $H(\{1\}^{j};n)=0$ when $j<0$ or $j>n$.  We also take $\H(\phi;n)=1$.

The polynomial $f_n$ satisfies a functional equation relating $T$ and $-1-T$.  We can compute

\begin{eqnarray*}
f_n(-1-T)&=&\frac{1}{n!}\prod_{j=1}^n ((n+1)(-1-T)+j)\\
&=&\frac{(-1)^n}{n!}\prod_{j=1}^n ((n+1)T+n+1-j)\\
&=&\frac{(-1)^n}{n!}\prod_{i=1}^n ((n+1)T+i)\\
&=&(-1)^nf_n(T)
\end{eqnarray*}
(In the third line we have made the substitution $i=n+1-j$).
We expand this functional equation using \eqref{harmf} to get

\begin{eqnarray*}
\sum_{j\geq 0}(n+1)^{j}H(\{1\}^{j};n)T^{j}&=&(-1)^n\sum_{j\geq 0}(n+1)^{j} H(\{1\}^{j};n)(-1-T)^{j}\\
&=&\sum_{j\geq 0}(-1)^{n+j}(n+1)^{j} H(\{1\}^{j};n)\sum_{0\leq i\leq j}{j\choose i}T^{i}\\
&=&\sum_{i\geq 0}\left(\sum_{j\geq i}{j\choose i}(-1)^{n+j} (n+1)^{j} H(\{1\}^{j};n)\right)T^{i}
\end{eqnarray*}
This holds identically in $T$.  Equating the coefficient of $T^{j}$ on each side of the preceding equality and rearranging gives
the following identity. 

%******************
% Proposition 2.1
%*****************
\begin{proposition}
For all non-negative integers $n$, $j$, we have
\begin{equation}\label{rep0}
(n+1)^{j} \H(\{1\}^{j};n)+\sum_{i\geq j}(-1)^{n+i+1}{i\choose j}(n+1)^{i} \H(\{1\}^{i};n)=0
\end{equation}
\end{proposition}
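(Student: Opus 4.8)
The plan is to obtain the identity \eqref{rep0} by equating coefficients of $T^j$ in the functional equation $f_n(T)=(-1)^nf_n(-1-T)$ already established above. Since all the necessary ingredients are in place, the proof is a matter of careful bookkeeping rather than any new idea.

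First I would substitute the generating-function expression \eqref{harmf} into both sides of the functional equation and regard each side as a formal power series in $T$. On the left the coefficient of $T^j$ is immediately $(n+1)^j\H(\{1\}^j;n)$. For the right-hand side I would expand $(-1-T)^j=(-1)^j(1+T)^j$ by the binomial theorem, producing the double sum $(-1)^n\sum_{j\ge0}(-1)^j(n+1)^j\H(\{1\}^j;n)\sum_{0\le i\le j}{j\choose i}T^i$. Interchanging the order of summation to collect like powers of $T$ then exhibits the coefficient of $T^j$ on the right as $\sum_{i\ge j}{i\choose j}(-1)^{n+i}(n+1)^i\H(\{1\}^i;n)$.

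Next, since the functional equation holds identically in $T$, I would equate these two coefficients for each fixed $j$, obtaining $(n+1)^j\H(\{1\}^j;n)=\sum_{i\ge j}{i\choose j}(-1)^{n+i}(n+1)^i\H(\{1\}^i;n)$. Transposing the right-hand side and rewriting $-(-1)^{n+i}$ as $(-1)^{n+i+1}$ yields exactly \eqref{rep0}.

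The only point requiring care, and what I expect to be the main (minor) obstacle, is the index and sign bookkeeping: interchanging the two summations correctly, keeping the binomial symbol ${i\choose j}$ (not ${j\choose i}$) after relabeling the indices, and folding the transposed sign into $(-1)^{n+i+1}$. The convention $\H(\{1\}^i;n)=0$ for $i>n$, together with the vanishing of ${i\choose j}$ for $i<j$, guarantees that all sums are finite and that no boundary terms are lost when the order of summation is swapped, so the formal power-series manipulations are rigorous.
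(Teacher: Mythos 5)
Your proposal is correct and follows essentially the same route as the paper: the paper likewise derives \eqref{rep0} by substituting the expansion \eqref{harmf} into the functional equation $f_n(-1-T)=(-1)^nf_n(T)$, expanding $(-1-T)^j$ binomially, interchanging the order of summation, and equating coefficients of each power of $T$. Your sign and index bookkeeping (the binomial symbol ${i\choose j}$ after relabeling, and folding the transposed term into $(-1)^{n+i+1}$) matches the paper's computation exactly.
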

The sum above is finite (terms corresponding to $i>n$ vanish). We thus have a family of linear equations (indexed by $j$) satisfied by the quantities $(n+1)^i\H(\{1\}^i;n)$, $i=0,1,\ldots,n$.

From the above we obtain a general set of identities  expressing binomial coefficients
in terms of  the $ H(\{1\}^{j};n)$.

%************************
% Proposition  2.2
%
%***********************
\begin{proposition}\label{thrm}
Let $n$ be a non-negative integer, $k$, $c_{0},c_{1},\ldots$ indeterminates, and define
\begin{equation}
\label{defb}
b_{j}:=(k-1)^{j}+c_{j} + (-1)^{n+j+1}\sum_{i=0}^{j}{j\choose i}c_{i}.
\end{equation}
Then the equation
\begin{equation}
\label{eq}
{k(n+1)-1\choose n}=\sum_{j=0}^{\infty} b_{j}(n+1)^{j} H(\{1\}^{j};n),
\end{equation}
holds identically in the indeterminates $k$, $c_0,c_1,\ldots$. Here the  right  side of \eqref{eq} 
is a finite sum, since $H(\{1\}^j; n)=0$ for $j >n$.
\end{proposition}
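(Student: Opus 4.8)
The plan is to start from the known exact expansion
\[
{k(n+1)-1\choose n}=\sum_{j=0}^{n}(k-1)^{j}(n+1)^{j}\H(\{1\}^{j};n),
\]
which is the specialization of the Leh\-mer-type identity quoted in the introduction (with $p-1$ replaced by $n$, so that $kp-1$ becomes $k(n+1)-1$). This already accounts for the $(k-1)^{j}$ summand appearing inside the definition \eqref{defb} of $b_j$. It therefore suffices to show that the remaining contribution,
\[
\sum_{j=0}^{\infty}\left(c_{j}+(-1)^{n+j+1}\sum_{i=0}^{j}{j\choose i}c_{i}\right)(n+1)^{j}\H(\{1\}^{j};n),
\]
vanishes identically as a function of the indeterminates $c_0,c_1,\ldots$.

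Since this expression is linear in the $c_i$, I would prove it vanishes by checking that the coefficient of each $c_i$ is zero. First I would interchange the two sums so as to collect terms by the index $i$ rather than $j$. Writing the double sum out, the coefficient of a fixed $c_i$ receives one contribution of $+1$ from the diagonal term $c_j$ (when $j=i$), weighted by $(n+1)^i\H(\{1\}^i;n)$, and further contributions from the inner alternating sum $(-1)^{n+j+1}\binom{j}{i}c_i$ for every $j\ge i$, each weighted by $(n+1)^j\H(\{1\}^j;n)$. Collecting these, the coefficient of $c_i$ is exactly
\[
(n+1)^{i}\H(\{1\}^{i};n)+\sum_{j\ge i}(-1)^{n+j+1}{j\choose i}(n+1)^{j}\H(\{1\}^{j};n).
\]
But this is precisely the left-hand side of the linear identity \eqref{rep0} (with the roles of the summation indices matched: $j$ there is $i$ here, and $i$ there is $j$ here), which the preceding Proposition asserts equals zero for all non-negative integers $n$ and $i$. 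Hence every coefficient of $c_i$ vanishes, and the extra contribution is identically zero, leaving exactly the claimed identity \eqref{eq}.

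The main thing to get right, rather than a genuine obstacle, is the bookkeeping in the interchange of summation order and the relabeling of indices, so that the collected coefficient of $c_i$ lines up exactly with \eqref{rep0} as stated; a sign or a binomial-coefficient transposition error there would break the argument. I would also remark explicitly that all sums are finite, since $\H(\{1\}^j;n)=0$ for $j>n$, so no convergence issues arise and the rearrangement is purely formal in the polynomial ring $\Q[k,c_0,c_1,\ldots]$. With those points addressed, the identity follows immediately from \eqref{rep0} term by term, and no further estimates or case analysis are needed.
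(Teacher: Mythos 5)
Your proposal is correct and takes essentially the same approach as the paper: both begin from the exact expansion ${k(n+1)-1\choose n}=f_n(k-1)=\sum_{j\geq 0}(k-1)^j(n+1)^j H(\{1\}^j;n)$ and then add the $c_j$-weighted linear combination of the identities \eqref{rep0}, which the paper states in one line and you verify explicitly by interchanging sums and checking that the coefficient of each $c_i$ is exactly the left side of \eqref{rep0} after relabeling indices. Your bookkeeping (including the index swap and the remark that all sums are finite because $H(\{1\}^j;n)=0$ for $j>n$) is accurate, so the argument is complete.
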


\begin{proof}
To begin, we use Equation \eqref{harmf} to write
\begin{eqnarray*}
{k(n+1)-1\choose n}&=&f_n(k-1)\\
&=&\sum_{j\geq0}(k-1)^{j}(n+1)^{j} H(\{1\}^{j};n)
\end{eqnarray*}
Considering $n$ fixed, we add to this equation a linear combination of equations \eqref{rep0}
 (where $c_{j}$ is the coefficient of the equation indexed by $j$) to obtain the general formula.
\end{proof}

%************************
% Remark 2.3
%***********************
\begin{remark}
{\em By making suitable choices of the parameters $c_i$ in 
Proposition \ref{thrm}, we can arrange to have $b_j=0$ for many $j$. Theorem \ref{MT} is obtained by optimizing this process. 
Special cases of the identities in this proposition were noted long ago.
For example, Emma Lehmer (\cite{Leh38}, p.\ 360) used the identity
\[
{p-1 \choose k} = (-1)^k \sum_{j=0}^k (-1)^j p^j \H( \{ 1\}^j; k).
\]
This  particular identity is valid for all integers $m$, namely 
\[
{m \choose k} = (-1)^k  \sum_{j=0}^k (-1)^j m^j \H( \{ 1\}^j;k).
\]
However most identities produced above hold only for restricted
values of $m$, namely $m=k(n+1)-1$, for fixed $n$. }
\end{remark}
%******************
% Section 3
%*****************

\section{Congruences for  ${kp-1\choose p-1}$ Modulo Powers of $p$}

To obtain congruences for ${kp-1\choose p-1}$, we will take $k$ to be an integer and truncate the expansion of 
Proposition \ref{thrm}, with $n=p-1$, $p$ a prime.  To establish a bound on the error due to truncation, we need to establish some congruence properties of multiple harmonic sums.

In the remainder of the paper, when $p$ is understood to be a fixed prime, we sometimes abbreviate 
\[
\H(\{1\}^j):= \H(\{1\}^j;p-1).
\]

%******************
%Subsection 3.1
%*****************
\subsection{Congruence properties of multiple harmonic sums}

Zhao (\cite{Zh08}, Theorem 1.6) gives the following congruence involving multiple harmonic sums $\H(\{1\}^j;p-1)$ and Bernoulli numbers:

%******************
%Proposition 3.1
%*****************
\begin{proposition}
\label{scc}
Let $p$ be a fixed odd prime, and $j$ an integer with $1\leq j\leq p-3$.  Then we have
\[
\H(\{1\}^j; p-1)\equiv\begin{cases}\frac{-B_{p-1-j}}{j+1}p\pmod{p^2}~~~~~\text{ if }j\equiv 0\mod{2}\\
\left(\frac{-(j+1)}{2(j+2)}B_{p-2-j}\right)p^2\pmod{p^3}\text{ if }j\equiv 1\mod{2}\\
\end{cases}
\]
\end{proposition}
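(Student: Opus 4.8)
The plan is to reduce the statement about the elementary symmetric multiple harmonic sums $\H(\{1\}^j;p-1)$ to the corresponding \emph{power sum} congruences for $\H(\{r\};p-1)=\sum_{i=1}^{p-1}i^{-r}$, which are classical. Write $e_j:=\H(\{1\}^j;p-1)$, the $j$-th elementary symmetric function in $1,\tfrac12,\ldots,\tfrac1{p-1}$, and $w_r:=\H(\{r\};p-1)$, the corresponding power sum. Newton's identities give, with $e_0=1$,
\[
j\,e_j=\sum_{i=1}^{j}(-1)^{i-1}e_{j-i}\,w_i .
\]
The key external inputs are the two classical power sum congruences, valid for $1\le r\le p-3$: for $r$ even, $w_r\equiv \frac{r}{r+1}B_{p-1-r}\,p \pmod{p^2}$, and for $r$ odd, $w_r\equiv -\frac{r(r+1)}{2(r+2)}B_{p-2-r}\,p^2\pmod{p^3}$. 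In particular $v_p(w_r)\ge 1$ for $r$ even and $v_p(w_r)\ge 2$ for $r$ odd.

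First I would establish the valuation bounds $v_p(e_m)\ge 1$ for $m$ even and $v_p(e_m)\ge 2$ for $m$ odd, by strong induction on $m$ using the Newton recursion: each summand $e_{m-i}w_i$ has $p$-valuation at least the sum of the inductive bound on $v_p(e_{m-i})$ and the bound on $v_p(w_i)$, and a short check of the two parities of $i$ shows that this total is $\ge 1$ (resp.\ $\ge 2$) when $m$ is even (resp.\ odd). The leading contribution comes from the $i=m$ term $(-1)^{m-1}w_m$.

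Next, to pin down the leading congruence, I would re-run the same bookkeeping one order higher. For $j$ even, every term $e_{j-i}w_i$ with $1\le i\le j-1$ has valuation $\ge 2$ (for $i$ odd this comes from $v_p(w_i)\ge 2$, for $i$ even from $v_p(w_i)\ge 1$ together with $v_p(e_{j-i})\ge 1$), so $j\,e_j\equiv -w_j\pmod{p^2}$; substituting the even power sum congruence gives $e_j\equiv \frac{-B_{p-1-j}}{j+1}p\pmod{p^2}$. For $j$ odd the analogous check shows every term with $i<j$ has valuation $\ge 3$, so $j\,e_j\equiv w_j\pmod{p^3}$, and the odd power sum congruence yields $e_j\equiv \frac{-(j+1)}{2(j+2)}B_{p-2-j}p^2\pmod{p^3}$. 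The restriction $j\le p-3$ guarantees that all Bernoulli indices appearing lie in $[2,p-2]$ and are even, so $B_{p-1-j}$ and $B_{p-2-j}$ are $p$-integral and no von Staudt--Clausen denominator intervenes.

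The main obstacle is establishing the two power sum congruences to the stated precision. The cleanest route is to use Euler's theorem to replace $i^{-r}$ by $i^{\,\phi(p^N)-r}$ modulo $p^N$ (with $N=2$ or $3$), turning $w_r$ into a genuine power sum $\sum_{i=1}^{p-1}i^{M}$, and then apply Faulhaber's formula $\sum_{i=1}^{p-1}i^{M}=\frac{1}{M+1}\sum_{t=0}^{M}\binom{M+1}{t}B_t\,p^{M+1-t}$. Extracting the terms that survive modulo $p^2$ (resp.\ $p^3$) leaves a Bernoulli number of large even index, which the Kummer congruences identify with $B_{p-1-r}$ (resp.\ $B_{p-2-r}$) up to a $p$-adic unit, producing the stated leading coefficients. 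Alternatively one may pair $i$ with $p-i$, expand $(p-i)^{-r}$ binomially, and run a secondary induction downward on $r$; this keeps everything elementary at the cost of heavier bookkeeping. Either way, the delicate points are getting the leading rational coefficients exactly right and invoking Kummer's congruences to convert the high-index Bernoulli numbers into the low-index ones appearing in the statement.
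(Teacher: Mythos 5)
The paper does not actually prove Proposition \ref{scc}: it imports the statement wholesale from Zhao (\cite{Zh08}, Theorem 1.6). Your proposal, by contrast, is a genuine derivation, and the reduction step checks out completely: with $e_j=\H(\{1\}^j;p-1)$ and $w_r=\H(\{r\};p-1)$, Newton's identities plus the parity-sensitive valuation bounds $v_p(e_m)\geq 1$ ($m$ even), $v_p(e_m)\geq 2$ ($m$ odd) do give $j\,e_j\equiv -w_j\pmod{p^2}$ for $j$ even and $j\,e_j\equiv w_j\pmod{p^3}$ for $j$ odd (in the even case the cross terms have valuation $\geq 2$, in the odd case $\geq 3$, exactly as you say), and dividing by $j$ is legitimate since $1\leq j\leq p-3<p$; substituting the classical power-sum congruences then reproduces the stated coefficients $\frac{-B_{p-1-j}}{j+1}$ and $\frac{-(j+1)}{2(j+2)}$ on the nose. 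What your route buys is a self-contained proof reduced to single power sums, which is essentially how such results are established in the literature; what it costs is that all the real work is hidden in the two power-sum congruences to mod $p^2$ and mod $p^3$ precision, and there your argument is a sketch rather than a proof. The sketch is on the right track, but to execute it you must note that with $M=\phi(p^N)-r$ the Faulhaber term $t=M$ carries $B_M$, which vanishes for odd $M>1$, so in the odd case the leading contribution comes from $t=M-1$ with coefficient $\frac{M}{2}B_{M-1}p^2$; you must also track the one power of $p$ lost to von Staudt--Clausen denominators whenever $(p-1)\mid t$, and check (as is true, since $p-1$ is even and the dangerous nearby indices are odd) that no such term survives at the working modulus. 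Kummer's congruence modulo $p$ alone then suffices, because the identified Bernoulli number multiplies $p$ (resp.\ $p^2$). One harmless slip: the Bernoulli indices $p-1-j$ (for even $j\leq p-3$) and $p-2-j$ (for odd $j\leq p-4$) lie in $[2,p-3]$ and are even, not in $[2,p-2]$ as you wrote; the conclusion that von Staudt--Clausen causes no trouble is unaffected. Overall your argument is correct and strictly more informative than the paper's treatment, which offers only a citation.
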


We provide additional congruence for $\H(\{1\}^j); p-1)$ for those $j$ which are not covered by Proposition \ref{scc}:

%******************
%Proposition 3.2
%*****************
\begin{proposition}
\label{propextra}
Let $p$ be a fixed odd prime, and $j$ a positive integer.
\begin{enumerate}[(i)]
\item If $j=p-2$ we have $\H(\{1\}^j; p-1)\equiv\frac{1}{2}p\pmod{p^2}$.
\item If $j=p-1$, we have $\H(\{1\}^j); p-1) \equiv-1\pmod{p}$.
\item If $j\geq p$, we have $\H(\{1\}^j; p-1)=0$.
\end{enumerate}
\end{proposition}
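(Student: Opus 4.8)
The plan is to exploit the description of $\H(\{1\}^j; p-1)$ as the $j$-th elementary symmetric function in the reciprocals $1, \tfrac{1}{2}, \ldots, \tfrac{1}{p-1}$, which is exactly what the coefficient-by-coefficient reading of \eqref{harmf} provides. Writing $e_i$ for the $i$-th elementary symmetric function in the integers $1, 2, \ldots, p-1$, the standard identity relating a symmetric function of reciprocals to one of the original variables gives
\[
\H(\{1\}^j; p-1) = \frac{e_{p-1-j}(1,2,\ldots,p-1)}{(p-1)!},
\]
since summing the products of $j$ of the reciprocals amounts to dividing the product of all $p-1$ integers into the complementary products. This single formula handles all three cases, the only further input being Wilson's theorem $(p-1)! \equiv -1 \pmod{p}$.

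For part (iii) I would simply observe that each term of $\H(\{1\}^j; p-1)$ requires $j$ strictly decreasing indices drawn from $\{1,\ldots,p-1\}$; when $j \geq p$ no such choice exists, so the sum is empty and equals $0$ (this is also the stated convention, since $p-1 < j$). For part (ii), with $j = p-1$ the only admissible term uses every index, so $\H(\{1\}^{p-1}; p-1) = 1/(p-1)!$ (equivalently $e_0 = 1$ in the displayed formula), and Wilson's theorem immediately yields $\equiv -1 \pmod{p}$.

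For part (i), with $j = p-2$ exactly one index is omitted from each term, so $e_1(1,\ldots,p-1) = 1 + 2 + \cdots + (p-1) = \tfrac{p(p-1)}{2}$ appears and
\[
\H(\{1\}^{p-2}; p-1) = \frac{p(p-1)}{2(p-1)!}.
\]
The one genuine computation is the reduction modulo $p^2$: the numerator carries a single factor of $p$, while the remaining factor $\tfrac{p-1}{2(p-1)!}$ is a $p$-adic unit (using that $2$ is invertible for odd $p$), so it suffices to evaluate that unit modulo $p$. There $p-1 \equiv -1$ and $(p-1)! \equiv -1$ combine to give $\tfrac{1}{2}$, whence $\H(\{1\}^{p-2}; p-1) \equiv \tfrac{p}{2} \pmod{p^2}$.

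I expect no serious obstacle here: the entire argument is elementary once the elementary-symmetric-function reformulation is in place, and the only point demanding care is correctly isolating the lone power of $p$ in part (i), so that the remaining factors may legitimately be reduced modulo $p$ by Wilson's theorem.
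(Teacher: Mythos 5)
Your proposal is correct and matches the paper's proof in all essentials: both compute $\H(\{1\}^{p-2};p-1)=\frac{p(p-1)}{2(p-1)!}$ (your $e_1/(p-1)!$ is exactly the paper's $\frac{1}{(p-1)!}\sum_{i=1}^{p-1} i$), both get $\H(\{1\}^{p-1};p-1)=1/(p-1)!$, both note the empty sum for $j\geq p$, and both finish with Wilson's theorem. Your only addition is making explicit why Wilson's theorem (a mod $p$ statement) suffices for the mod $p^2$ conclusion in (i) — by isolating the single factor of $p$ and reducing the remaining $p$-adic unit mod $p$ — a step the paper leaves implicit.
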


\begin{proof}
\begin{enumerate}[(i)]
\item We have
\begin{eqnarray*}
\H(\{1\}^{p-2})&=&\sum_{i=1}^{p-1}\frac{1}{1\cdots \hat{i}\cdots(p-1)}\\
&=&\frac{1}{(p-1)!}\sum_{i=1}^{p-1}i\\
&=&\frac{1}{(p-1)!}\frac{p(p-1)}{2}\\
&\equiv&\frac{p}{2}\pmod{p^2}
\end{eqnarray*}
where in the last line, we have used Wilson's theorem, which states that $(p-1)!\equiv-1\pmod{p}$.
\item We have
\begin{eqnarray*}
\H(\{1\}^{p-1})&=&\frac{1}{(p-1)!}\\
&\equiv&-1\pmod{p}
\end{eqnarray*}
\item In this case the defining sum is empty.
\end{enumerate}
\end{proof}

%******************
%Subsection 3.2
%*****************
\subsection{A general family of congruences}

We can now obtain our general family of congruences for ${kp-1\choose p-1}$.  The congruences are obtained from Proposition \ref{thrm} by truncation. We take some care to epress the error due to truncation in terms of Bernoulli numbers.
%********************
% Theorem  3.3
%*******************

\begin{theorem}[General Wolstenholme-like Congruence]
\label{gwc}
Let $k$ be an integer.  Let $c_{0},c_{1},\ldots\in\Q$ be given, and take $b_{j} \in \Q$ defined by 
\begin{equation}
\label{defb}
b_j=(k-1)^j+c_j+(-1)^{j+1}\sum_{i=0}^j {j\choose i}c_i
\end{equation}
Fix an odd prime $p$ which does not divide the denominator of any $c_i$, and let $N$ be a non-negative integer.  Define
\[
E_N:={kp-1\choose p-1}-\sum_{j=0}^{N}b_{j}p^{j} \H(\{1\}^{j}; p-1)
\]
\begin{enumerate}
\item[\textbf{(i)}] If $0\leq N\leq p-4$, we have
\[
E_N\equiv\frac{-B_{p-3-N}}{N+3}\left(\frac{N+2}{2}b_{N+1}+b_{N+2}\right)p^{N+3}\pmod{p^{N+4}}
\]
when $N$ is even, and
\[
E_N\equiv\frac{-B_{p-2-N}}{N+2}b_{N+1}p^{N+2}\pmod{p^{N+3}}
\]
when $N$ is odd.

\item[\textbf{(ii)}] If $N=p-3$, we have
\[
E_N\equiv\left(\frac{b_{N+1}}{2}-b_{N+2}\right)p^{N+2}\pmod{p^{N+3}}
\]
\item[\textbf{(iii)}] If $N=p-2$, we have
\[
E_N\equiv-b_{N+1}p^{N+1}\pmod{p^{N+2}}
\]
\item[\textbf{(iv)}] If $N\geq p-1$, we have
\[
E_N=0
\]
\end{enumerate}
In particular this implies that
\begin{equation}
\label{gwceq}
{kp-1\choose p-1}\equiv\sum_{j=0}^N b_jp^j\H(\{1\}^j) \begin{cases} \mod{p^{N+3}}\text{ if }N\leq p-4,\text{ $N$ even}\\
\mod{p^{N+2}}\text{ if }N\leq p-4,\text{ $N$ odd}\\
\mod{p^{N+2}}\text{ if }N=p-3\\
\mod{p^{N+1}}\text{ if }N=p-2\\
\mod{0}\text{ if }N\geq p-1
\end{cases}
\end{equation}
(Congruence mod 0 means equality)

\end{theorem}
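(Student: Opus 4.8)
The plan is to recognize that every case of the theorem comes from truncating a single \emph{exact} identity and then estimating the discarded tail $p$-adically. Applying Proposition \ref{thrm} with $n=p-1$ and noting that, since $p$ is odd, $n=p-1$ is even so that the sign $(-1)^{n+j+1}$ there agrees with the $(-1)^{j+1}$ in \eqref{defb}, the $b_j$ of the present statement are exactly the coefficients of that proposition, and we get the exact equality
\[
{kp-1\choose p-1}=\sum_{j=0}^{p-1}b_jp^j\H(\{1\}^j;p-1),
\]
the sum being finite because $\H(\{1\}^j;p-1)=0$ for $j\geq p$. Subtracting the partial sum gives
\[
E_N=\sum_{j=N+1}^{p-1}b_jp^j\H(\{1\}^j;p-1).
\]
Thus the theorem reduces to reading off the leading $p$-adic behaviour of this tail; the hypothesis that $p$ divides no denominator of any $c_i$ guarantees each $b_j$ is $p$-integral, so it never disturbs valuations.

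Next I would record, straight from Propositions \ref{scc} and \ref{propextra}, the valuation and leading term of each summand $p^j\H(\{1\}^j;p-1)$. For even $j$ with $1\leq j\leq p-3$ the factor $\H$ contributes one power of $p$, so $p^j\H$ has valuation at least $j+1$ with leading coefficient $-B_{p-1-j}/(j+1)$; for odd $j$ with $1\leq j\leq p-3$ it contributes two powers, giving valuation at least $j+2$ with leading coefficient $-(j+1)B_{p-2-j}/\big(2(j+2)\big)$; and for $j=p-2$ and $j=p-1$ Proposition \ref{propextra} gives valuation at least $p-1$, with coefficients $\tfrac12$ and $-1$ respectively. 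The key structural point is that along the tail the valuation of $p^j\H$ is nondecreasing and rises by at least $2$ over any two consecutive steps, so only the first one or two tail terms can reach the leading power of $p$.

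With this in hand each case is a direct computation. In case (i) with $N$ odd, the first tail index $j=N+1$ is even and lies in the range of Proposition \ref{scc}, contributing at level $p^{N+2}$, while $j=N+2$ is odd with valuation at least $N+4$; the single surviving coefficient $-B_{p-2-N}/(N+2)\cdot b_{N+1}$ yields the stated formula. In case (i) with $N$ even, the parity of $p$ forces $N\leq p-5$, so both $j=N+1$ (odd) and $j=N+2$ (even) fall inside $1\leq j\leq p-3$ and both contribute at level $p^{N+3}$; adding their leading coefficients $-(N+2)B_{p-3-N}/\big(2(N+3)\big)\,b_{N+1}$ and $-B_{p-3-N}/(N+3)\,b_{N+2}$ gives the claimed combination. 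Cases (ii) and (iii) are the boundary cases handled by Proposition \ref{propextra}: for $N=p-3$ both $j=p-2,p-1$ land at valuation $p-1=N+2$ and combine to $(\tfrac12 b_{N+1}-b_{N+2})p^{N+2}$, while for $N=p-2$ only $j=p-1$ remains, giving $-b_{N+1}p^{N+1}$. Case (iv) is immediate since the tail is empty, and the summary \eqref{gwceq} follows by reading off which power of $p$ bounds $E_N$ in each case.

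The work here is bookkeeping rather than a single hard step, but it must be done carefully at the boundary indices: one has to check that the restriction $1\leq j\leq p-3$ of Proposition \ref{scc} is respected and that the exceptional values at $j=p-2,p-1$ are invoked exactly when they are needed. The one subtlety worth flagging is the apparent edge case $N=p-4$ in the even branch of (i); it does not actually arise, because $p$ odd makes $p-4$ odd, so the even branch is automatically confined to $N\leq p-5$ and never reaches the index $p-2$ where Proposition \ref{scc} no longer applies.
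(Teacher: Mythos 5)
Your proposal follows the paper's proof essentially step for step: the same exact identity from Proposition \ref{thrm} with $n=p-1$ (including the parity observation that makes the signs match \eqref{defb}), the same tail expression $E_N=\sum_{j=N+1}^{p-1}b_jp^j\H(\{1\}^j)$ after killing $j\geq p$ via Proposition \ref{propextra}(iii), the same $p$-integrality remark, and the same case-by-case extraction of leading terms from Propositions \ref{scc} and \ref{propextra}. Cases (ii), (iii), (iv) and the even branch of (i) are handled exactly as in the paper, and you correctly dispose of the even-branch boundary by the parity argument ($p$ odd forces even $N\leq p-5$).

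There is, however, one localized slip, and it sits precisely at the boundary the paper isolates as its own sub-case. In your odd-$N$ branch of (i) you assert that $j=N+2$ is ``odd with valuation at least $N+4$,'' implicitly citing Proposition \ref{scc}. But when $N=p-4$ (which \emph{is} odd, since $p$ is odd, so this case genuinely occurs within $0\leq N\leq p-4$), the index $j=N+2=p-2$ falls outside the range $1\leq j\leq p-3$ of Proposition \ref{scc}; instead Proposition \ref{propextra}(i) gives $\H(\{1\}^{p-2})\equiv\frac{p}{2}\pmod{p^2}$, so the term $b_{N+2}p^{N+2}\H(\{1\}^{N+2})$ has valuation only $N+3$, not $N+4$. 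Your stated congruence survives, since valuation $N+3$ still annihilates the term modulo $p^{N+3}$, but the justification as written is wrong at this index, and your global structural claim (``valuation rises by at least $2$ over any two consecutive steps'') also fails near the top of the tail ($j=p-4$ to $j=p-2$ rises by only $1$). The paper treats exactly this situation as its separate Case (i-c), invoking Proposition \ref{propextra} there. It is ironic that you flagged the analogous boundary subtlety in the even branch while overlooking it in the odd branch; the repair is one line, but as submitted the odd case at $N=p-4$ rests on an inapplicable citation.
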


\begin{proof} We apply Theorem \ref{thrm} to obtain the equality
\[
{kp-1\choose p-1}=\sum_{j=0}^{\infty}b_{j}p^{j}\H(\{1\}^{j}).
\]
Because $n=p-1$ is even, the values of $b_i$ are independent of $p$.  It follows that
\[
E_N=\sum_{j=N+1}^{\infty}b_{j}p^{j}\H(\{1\}^{j})
\]
We use Proposition \ref{propextra}, statement \emph{(iii)} to eliminate the terms in the above some with $j\geq p$, giving
\begin{equation}
\label{errorterm}
E_N=\sum_{N+1\leq j\leq p-1}b_{j}p^{j}\H(\{1\}^{j})
\end{equation}
The $b_{j}$ are $p$-integral (because we assume the $c_{j}$ to be $p$-integral). Propositions \ref{scc} and \ref{propextra} also tell us that the expressions $\H(\{1\}^j)$ are $p$-integral. We now separate into cases.
\smallskip

\textbf{Case (i-a)}: Suppose $0\leq N\leq p-5$ and $N$ is even.  Due to the integrality of $b_j$ and $\H(\{1\}^j)$, we have
\begin{eqnarray*}
E_N\equiv b_{N+1}p^{N+1}\H(\{1\}^{N+1})&+&b_{N+2}p^{N+2}\H(\{1\}^{N+2})\\
&+&b_{N+3}p^{N+3}\H(\{1\}^{N+3})\pmod{p^{N+4}}
\end{eqnarray*}
Proposition \ref{scc} tells us that $\H(\{1\}^{N+1})\equiv\frac{-(N+2)}{2(N+3)}B_{p-3-N}p^2\pmod{p^3}$ and $\H(\{1\}^{N+2})\equiv\frac{-1}{N+3}p\pmod{p}$.  Also, Proposition \ref{scc} or Proposition \ref{propextra} give that $\H(\{1\}^{N+3})\equiv0\pmod{p}$.  Combining these give
\[
E_N\equiv\frac{-B_{p-3-N}}{N+3}\left(\frac{N+2}{2}b_{N+1}+b_{N+2}\right)p^{N+3}\pmod{p^{N+4}},
\]
as desired.
\smallskip

\textbf{Case (i-b)}: Suppose $0\leq N\leq p-5$ and $N$ is odd.  Due to the integrality of $b_j$ and $\H(\{1\}^j)$, we have
\[
E_N\equiv b_{N+1}p^{N+1}\H(\{1\}^{N+1})+b_{N+2}p^{N+2}\H(\{1\}^{N+2})\pmod{p^{N+3}}
\]
Proposition \ref{scc} tells us that $\H(\{1\}^{N+1})\equiv\frac{-1}{N+2}B_{p-2-N}p\pmod{p^2}$ and $\H(\{1\}^{N+2})\equiv 0\pmod{p^2}$.   This gives
\[
E_N\equiv \frac{-B_{p-2-N}}{N+2}b_{N+1}p^{N+2}\pmod{p^{N+3}},
\]
as desired.
\smallskip

\textbf{Case (i-c)}: Suppose $N=p-4$.  Due to the integrality of $b_j$ and $\H(\{1\}^j)$, we have
\[
E_N\equiv b_{N+1}p^{N+1}\H(\{1\}^{N+1})+b_{N+2}p^{N+2}\H(\{1\}^{N+2})\pmod{p^{N+3}}
\]
Proposition \ref{scc} tells us that $\H(\{1\}^{N+1})\equiv\frac{-1}{N+2}B_{p-2-N}p\pmod{p^2}$.  Proposition \ref{propextra} tells us that $\H(\{1\}^{N+2})\equiv \frac{1}{2}p\pmod{p^2}$. Combining these gives
\[
E_N\equiv \frac{-B_{p-2-N}}{N+2}b_{N+1}p^{N+2}\pmod{p^{N+3}}
\]
\smallskip

\textbf{Case (ii)}: Suppose $N=p-3$.  In this case \eqref{errorterm} becomes
\[
E_N=b_{N+1}p^{N+1}\H(\{1\}^{p-2})+b_{N+2}p^{N+2}\H(\{1\}^{p-1})
\]

Proposition \ref{propextra} gives $\H(\{1\}^{p-2})\equiv\frac{p}{2}\pmod{p^2}$ and $\H(\{1\}^{p-1})\equiv-1\pmod{p}$, so
\[
E_N\equiv\left(\frac{b_{N+1}}{2}-b_{N+2}\right)p^{N+2}\pmod{p^{N+3}}
\]
\smallskip

\textbf{Case (iii)}: Suppose $N=p-2$.  In this case \eqref{errorterm} becomes
\[
E_N=b_{N+1}p^{N+1}\H(\{1\}^{p-1})
\]
Proposition \ref{propextra} gives $\H(\{1\}^{p-1})\equiv-1\pmod{p}$, so
\[
E_N\equiv -b_{N+1}p^{N+1}\pmod{p^{N+2}}
\]
\smallskip

\textbf{Case (iv)}: Suppose $N\geq p-1$.  Then \eqref{errorterm} shows that $E_N=0$.

\end{proof}

%*****************
%Definition 3.4
%*****************
\begin{definition}\label{de34}
{\em
We call the congruence \eqref{gwceq} the \emph{generalized Wolstenholme congruence} associated with the data
\[
\left[k,(c_{0},c_{1},\ldots),N\right],
\]
and we will say that $b_0,\ldots,b_N$ are the \emph{generalized Wolstenholme coefficients} associated with this data.

We let $\cF_{N,k}$ denote the family of all generalized Wolstenholme congruences above, where $N, k$ are fixed
and the other data varies.}
\end{definition}

%*****************
%Remark 3.5
%*****************
\begin{remark}
{\em For fixed $k$, $N$, the family $\cF_{N,k}$ has the structure of an affine linear space over $\Q$ in the following way: if $B=(b_0,\ldots,b_N)$ and $B'=(b'_0,\ldots,b'_N)$ are the coefficients associated with the data $[k,(c_0,c_1,\ldots),N]$, $[k,(c'_0,c'_1,\ldots),N]$ respectively, and $t\in\Q$, then
\[
tB+(1-t)B'=(tb_0+(1-t)b'_0,\,\ldots,\,tb_N+(1-t)b'_N),
\]
where the numbers on right hand side are the generalized Wolstenholme coefficients associated with the data
\[
[k,(tc_0+(1-t)c'_0,\,tc_1+(1-t)c'_1,\,\ldots),N]
\]
In the next section we will focus exclusively on the case where $N=2n$ is even.  
We will determine that the affine space of generalized 
Wolstenholme coefficients, for arbitrary $k$ and $N=2n$, has dimension $n$.  The `optimized' congruence will be uniquely determined among this $n$ dimensional family.}
\end{remark}

Let $N$ be a positive integer and set $M=N+3$ if $N$ is even, $M=N+2$ if $N$ is odd. It might be reasonable to expect that every congruence of the form
\[
{kp-1\choose p-1}\equiv \sum_{j=0}^Nb_jp^j\H(\{1\}^j; p-1)\pmod{p^M}
\]
that holds for all sufficiently large primes, actually comes from Theorem \ref{gwc}.  We formulate 
this as the following conjecture:

%*****************
% Conjecture 3.6
%*****************

\begin{conjecture} {\rm (Strong Uniqueness Conjecture)}
\label{uniq}
If $k,m$ are integers with $m\geq0$, and $a_{0},\ldots,a_{n}\in\Q$ are such that
\[
{kp-1\choose p-1}\equiv a_{0}+a_{1}p H(\{1\}^{1}; p-1)+\ldots+a_{n}p^{n} H(\{1\}^{n}; p-1)\pmod{p^{m}}
\]
holds for all but finitely many $p$, then this congruence arises from Theorem \ref{gwc}, in the following sense: there are constants $c_{0},c_{1},\ldots\in \Q$ such that, if $b_{0},b_{1},\ldots$ are defined  by \eqref{defb}, then we have $a_{i}=b_{i}$ for $i=0,1,\ldots,\psi(m)$, where $\psi(m)=m-2$ if $m$ is even and $\psi(m)=m-3$ if $m$ is odd (here, we take $a_{i}=0$ for $i>n$).
\end{conjecture}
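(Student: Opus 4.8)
The plan is to recast the conjecture as a statement in linear algebra over $\Q$ coupled with an input about Bernoulli numbers, and to isolate the latter as the genuine obstruction. First I would pass to the homogeneous form. By Proposition~\ref{thrm} (with $n=p-1$) the exact identity ${kp-1\choose p-1}=\sum_{i\ge0}(k-1)^ip^i\H(\{1\}^i;p-1)$ holds, and Theorem~\ref{gwc} shows that every vector $(b_0,b_1,\ldots)$ of the shape \eqref{defb} yields a valid congruence. Thus the vectors arising from Theorem~\ref{gwc} form an affine subspace $A_k=(k-1)^{\bullet}+V$ of $\Q^{\infty}$, where $V\subseteq\Q^{\infty}$ is the relation space, namely the image of the linear map $c\mapsto\big(c_i+(-1)^{i+1}\sum_{\ell\le i}{i\choose\ell}c_\ell\big)_i$, equivalently the $\Q$-span of the relations \eqref{rep0}. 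Subtracting the exact identity from the hypothesized congruence turns the hypothesis into $\sum_i d_i\,p^i\H(\{1\}^i;p-1)\equiv0\pmod{p^m}$ with $d_i=a_i-(k-1)^i$ (and $a_i:=0$ for $i>n$); by Proposition~\ref{scc} the terms of index exceeding $\psi(m)$ have valuation at least $m$ and drop out, so only $(d_0,\ldots,d_{\psi(m)})$ is constrained. The conjecture thus becomes: this truncation must lie in the projection of $V$ onto the first $\psi(m)+1$ coordinates. The easy inclusion (projection of $V$ is contained in the projection of the universal kernel) holds because the relations \eqref{rep0} are exact identities; the hard inclusion is the reverse.

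To produce the constraints forced by the congruence I would expand each $p^i\H(\{1\}^i;p-1)$ to full precision modulo $p^m$. Proposition~\ref{scc} gives only the leading term, so the first self-contained sub-task is a higher-order refinement of it. The clean route is through power sums: from $\prod_{j=1}^{p-1}(1+pT/j)=\sum_i p^i\H(\{1\}^i;p-1)\,T^i$ and Newton's identities, each elementary symmetric sum $\H(\{1\}^i;p-1)$ is a polynomial with rational coefficients in the power sums $\H(\{r\};p-1)=\sum_{j=1}^{p-1}j^{-r}$, and each power sum admits a classical expansion in which the coefficient of $p^{t}$ is an explicit rational multiple of a Bernoulli number $B_{p-1-s}$, the index $s$ depending linearly on $r$ and $t$ (odd-index Bernoulli numbers vanishing). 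Substituting and collecting by powers of $p$, the homogeneous congruence becomes, at each level $p^{s}$ with $s<m$, a relation $\sum_i\kappa_{i,s}\,d_i\,B_{p-1-\sigma(i,s)}\equiv0\pmod p$ with rational $\kappa_{i,s}$. Several indices $i$ may share one Bernoulli number at a given level; for example at level $p^3$ both $i=1$ and $i=2$ enter through $B_{p-3}$, which is precisely why the conjecture pins down $(d_0,\ldots,d_{\psi(m)})$ only up to the relations of $V$, and not coordinatewise.

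The main obstacle is converting these Bernoulli-weighted relations into honest linear equations on the $d_i$. To deduce from $\sum_i\kappa_{i,s}\,d_i\,B_{p-1-\sigma(i,s)}\equiv0$ the vanishing of the individual $d$-combinations, I need that the finitely many Bernoulli residues $B_{p-1-\sigma}\bmod p$ in play are nonzero for infinitely many $p$, and, when more than one appears at a level, that no fixed nontrivial $\Q$-combination of them vanishes mod $p$ for all large $p$. This is exactly the kind of assertion that is presently out of reach: already the smallest case, the non-divisibility $p\nmid B_{p-3}$ for infinitely many $p$, is the open problem flagged in Proposition~\ref{prop-imp}. I therefore do not expect an unconditional proof, and the realistic target is a conditional theorem deriving the Strong Uniqueness Conjecture from an explicit package of Bernoulli non-vanishing and linear-independence statements.

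The final step, granting such independence, is bookkeeping. The honest equations extracted in this way must be matched against the equations cutting out the projection of $V$; these latter are the syzygies of the relations \eqref{rep0}, encoding the symmetry $f_{p-1}(-1-T)=f_{p-1}(T)$ and hence exactly the conditions (C1)--(C2) defining the \optimal polynomials. Checking that the Bernoulli-extracted equations reproduce precisely these defining equations, neither more nor fewer, then places $(d_0,\ldots,d_{\psi(m)})$ in the projection of $V$, i.e.\ gives $a_i=b_i$ for $i\le\psi(m)$ and completes the argument.
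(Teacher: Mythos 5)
You should note at the outset that the statement you are attempting is Conjecture \ref{uniq}, which the paper does not prove: it is left open, and there is no proof in the paper to compare yours against. In fact the paper itself shows this is out of reach of current techniques --- Proposition \ref{uniqimp} derives Conjecture \ref{cj-main} from Conjecture \ref{uniq}, and Proposition \ref{prop-imp} shows that already the case $n=k=1$ of Conjecture \ref{cj-main} would imply that $p$ does not divide the numerator of $B_{p-3}$ for infinitely many primes $p$, a long-standing open problem. Your third paragraph correctly concedes exactly this, so your proposal is not a proof but an outline of a conditional reduction, and it must be judged as such: as an unconditional argument it has an irreparable gap, namely the ``independence package'' of Bernoulli non-vanishing statements, which is precisely the obstruction the paper flags.

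As a conditional strategy, your opening reduction is sound and consistent with the paper's framework: subtracting the exact identity of Proposition \ref{thrm} (with $n=p-1$) and invoking the valuation bounds of Propositions \ref{scc} and \ref{propextra} does reduce the conjecture to the claim that the truncated vector $(d_0,\ldots,d_{\psi(m)})$ lies in the projection of the relation space spanned by \eqref{rep0}. However, the central step of your plan is oversimplified: you claim that each level $p^s$ yields a relation $\sum_i\kappa_{i,s}\,d_i\,B_{p-1-\sigma(i,s)}\equiv0\pmod p$ that is \emph{linear} in single Bernoulli residues. Since the elementary symmetric sums $H(\{1\}^i;p-1)$ are polynomial, not linear, in the power sums, their expansions to higher $p$-adic precision produce monomials of degree at least two in Bernoulli numbers, as well as Bernoulli numbers of indices such as $2p-4$ and $2p-5$ (compare the Tauraso congruence quoted in Section \ref{sec15}, which involves $B_{2p-5}$ and $B_{2p-4}$ already at precision $p^5$). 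The hypotheses you would need are therefore non-vanishing statements about polynomial expressions in Bernoulli residues, strictly harder than what you state; and your final step --- that the extracted equations cut out exactly the projection of the relation space, neither more nor fewer --- is asserted without verification and is where even a carefully formulated conditional theorem would require genuine work.
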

Proposition \ref{uniqimp} below will show that this conjecture implies the Uniqueness Conjecture \ref{cj-main}.
%states that truth of the Conjecture \ref{uniq} implies the truth of the Uniqueness Conjecture \ref{cj-main} in the introduction, so Conjecture \ref{uniq}
%may be hard to settle if it is true.

Now we consider some special cases of Theorem \ref{gwc}.
In what follows we will write $(c_{0},\ldots,c_{m})$ for the sequence $(c_{0},\ldots,c_{m},0,0,\ldots)$.

As one example, fix a positive integer $k$ and take the data $\left[k,((k-1)^2),2\right]$.  This gives $(b_{0},b_{1},b_{2},b_{3},b_{4})=(1,k(k-1),0)$, so we get the congruence
%********************
% Corollary 3.7
%*******************
\begin{corollary}
\label{sc1}
For all integers $k$ and all primes $p\neq 2,5$, we have
\[
{kp-1\choose p-1}\equiv 1+k(k-1)p\H(\{1\}; p-1)\pmod{p^{5}}
\]
\end{corollary}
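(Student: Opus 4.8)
The plan is to apply Theorem~\ref{gwc} to the specific data $[k,((k-1)^2),2]$; that is, to take $N=2$, $c_0=(k-1)^2$, and $c_i=0$ for all $i\ge 1$. Since $c_0$ is an integer, the hypothesis that $p$ not divide the denominator of any $c_i$ holds automatically for every prime $p$. The point of this particular choice of $c_0$ is that it is engineered to annihilate the $j=2$ coefficient, so that only the $j=0$ and $j=1$ terms survive in the truncated sum $\sum_{j=0}^{2}b_jp^j\H(\{1\}^j;p-1)$.

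First I would compute the generalized Wolstenholme coefficients from \eqref{defb}. With $c_0=(k-1)^2$ and the remaining $c_i$ zero, direct substitution gives $b_0=1+(k-1)^2-(k-1)^2=1$, then $b_1=(k-1)+(k-1)^2=k(k-1)$, and finally $b_2=(k-1)^2-(k-1)^2=0$. Recalling the conventions $\H(\{1\}^0;p-1)=\H(\phi;p-1)=1$, the truncated sum collapses to exactly $1+k(k-1)p\H(\{1\};p-1)$, which is the right-hand side of the claimed congruence.

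It then remains only to control the modulus by tracking which case of Theorem~\ref{gwc} governs each prime. For $p\ge 7$ we have $0\le N=2\le p-5$ with $N$ even, so part (i) applies and the error satisfies $E_2\equiv(\ast)\,p^{5}\pmod{p^{6}}$ for an explicit constant; hence $E_2\equiv 0\pmod{p^5}$ and the congruence holds modulo $p^5$. For $p=3$ we have $N=2=p-1$, so part (iv) applies and $E_2=0$, making the congruence an exact equality that in particular holds modulo $p^5$. Together these cover every prime $p\ge 7$ as well as $p=3$.

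The only genuine subtlety is the exclusion of $p=5$ (the prime $p=2$ is excluded simply because Theorem~\ref{gwc} is stated for odd $p$). When $p=5$ we have $N=2=p-3$, which places us in part (ii), where the error is only guaranteed to be a multiple of $p^{N+2}=p^{4}$ rather than $p^{5}$. Unlike the $p=3$ case, this genuinely fails for generic $k$: the part-(ii) error constant is $\frac{1}{2}b_3-b_4$, and computing $b_3=k(k-1)^2$ and $b_4=k(k-2)(k-1)^2$ from \eqref{defb} shows it reduces to $-k^2(k-1)^2$ modulo $5$, which is a unit unless $k\equiv 0,1\pmod 5$ (matching the exceptional-congruence phenomenon of Theorem~\ref{th-extra}). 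Thus $p=5$ must be excluded, and the remaining work is essentially the bookkeeping of matching $N=2$ to the correct case of Theorem~\ref{gwc} for each small prime.
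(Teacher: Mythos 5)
Your proposal is correct and takes essentially the same route as the paper, which also derives this corollary by feeding the data $\left[k,((k-1)^2),2\right]$ into Theorem~\ref{gwc} to get $(b_0,b_1,b_2)=(1,\,k(k-1),\,0)$; your coefficient computations and the case bookkeeping (part (i) for $p\geq 7$, part (iv) giving exact equality at $p=3$, part (ii) forcing the exclusion of $p=5$) all check out. Your explicit verification via $b_3=k(k-1)^2$ and $b_4=k(k-2)(k-1)^2$ that the modulus genuinely drops at $p=5$ unless $k\equiv 0,1\pmod 5$ is a small addition beyond what the paper records, and it correctly matches the exceptional-congruence constant $-k^{n+1}(k-1)^{n+1}$ with $n=1$.
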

This is a generalization of van Hamme's result \eqref{vanhamme}.

Taking the data $\left[2,(49,-18,4),6\right]$ gives\\
$(b_{0},b_{1},\ldots,b_{6})=(1,14,-12,8,0,0,0)$, so we get the identity

%********************
% Corollary 3.8
%*******************
\begin{corollary}
\label{sc2}
For all odd primes $p$, we have
\begin{eqnarray*}
{2p-1\choose p-1} &\equiv & 1+14p H(\{1\}^{1}; p-1)-12p^{2} H(\{1\}^{2}; p-1)\\
&& ~~~~~+8p^{3} H(\{1\}^{3}; p-1)\pmod{p^{9}}
\end{eqnarray*}
\end{corollary}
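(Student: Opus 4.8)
The plan is to derive Corollary \ref{sc2} as a direct specialization of the General Wolstenholme-like Congruence, Theorem \ref{gwc}, applied to the data $[2,(49,-18,4),6]$; that is, with $k=2$, $N=6$, and $(c_0,c_1,c_2)=(49,-18,4)$ and all remaining $c_i=0$. Since these $c_i$ are integers, the hypothesis of Theorem \ref{gwc} that $p$ not divide the denominator of any $c_i$ is met by every odd prime, so no primes are excluded on that account.

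First I would compute the generalized Wolstenholme coefficients $b_j$ from the defining formula \eqref{defb}. Because $k=2$ forces $(k-1)^j=1$ for every $j$, each coefficient reduces to $b_j = 1 + c_j + (-1)^{j+1}\sum_{i=0}^j {j\choose i}c_i$, and substituting the three nonzero $c_i$ yields $(b_0,\ldots,b_6)=(1,14,-12,8,0,0,0)$, a routine finite check. The vanishing of $b_4,b_5,b_6$ is the essential feature: it collapses the truncated sum $\sum_{j=0}^6 b_j p^j\H(\{1\}^j)$ down to exactly the four displayed terms $1+14p\H(\{1\})-12p^2\H(\{1\}^2)+8p^3\H(\{1\}^3)$.

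It then remains only to read the modulus off \eqref{gwceq} in two ranges of $p$. For $p\ge 11$ we are in the regime $N=6\le p-5$ with $N$ even, so part (i) (case (i-a)) of Theorem \ref{gwc} gives $E_6\equiv C\,p^{9}\pmod{p^{10}}$ for an explicit constant $C$ (a rational multiple of $B_{p-9}$), hence in particular $E_6\equiv 0\pmod{p^9}$, which is precisely the asserted congruence. For the three remaining odd primes $p\in\{3,5,7\}$ we instead have $N=6\ge p-1$, so case (iv) applies and yields the exact equality $E_6=0$; this implies the congruence modulo $p^9$ a fortiori. The intermediate cases (i-c), (ii), (iii) correspond to $p=10,9,8$, all composite, so they never arise and the two ranges cover all odd primes.

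The step most likely to invite a misstep is the modulus bookkeeping rather than any genuine difficulty. One should observe that although the error coefficient $\frac{N+2}{2}b_{N+1}+b_{N+2}$ appearing in part (i) is \emph{nonzero} here (one finds $b_7=8$, $b_8=-16$, giving $\frac{8}{2}\cdot 8-16=16$), this does not weaken the conclusion, since that coefficient multiplies $p^{N+3}=p^9$, so the whole error is $O(p^9)$ and therefore vanishes modulo $p^9$ regardless. The only point requiring real attention is that the small primes $p=3,5,7$ lie outside part (i) and must be disposed of through the equality case (iv), after which the stated congruence holds uniformly for all odd primes.
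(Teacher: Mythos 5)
Your proposal is correct and is essentially the paper's own proof: the paper derives Corollary \ref{sc2} precisely by specializing Theorem \ref{gwc} to the data $\left[2,(49,-18,4),6\right]$, which yields $(b_0,\ldots,b_6)=(1,14,-12,8,0,0,0)$, with the modulus $p^9$ read off from \eqref{gwceq}. Your additional bookkeeping — verifying the coefficient computation, checking that $p=3,5,7$ fall under the equality case (iv) while $p\ge 11$ falls under case (i) with $N=6$ even, and noting that the nonvanishing error coefficient $\frac{N+2}{2}b_7+b_8=16$ only affects the $p^{10}$ term — is accurate and simply makes explicit what the paper leaves implicit.
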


Corollaries \ref{sc1} and \ref{sc2} are special cases of Theorem \ref{gwc}, corresponding to $n=1$ and $n=3$, respectively.

%********************************
%
% Section 4
%
%********************************

\section{Optimized Wolstenholme Congruences}
We now state  a version of our main result (Theorem \ref{th-main}).
We show that when $N=2n$ is even, it is always possible to choose the data 
$(c_0,c_1,\ldots)$ so that $b_{n+1}=b_{n+2}=\ldots=b_{2n}=0$.  Moreover, this condition will uniquely determine the values of $b_j$ for $0\leq j\leq n$. We will derive this result using Theorem \ref{gwc}.

%******************
% Theorem 4.1
%*****************

\begin{theorem}[Optimized Wolstenholme Congruences]
\label{MT}
Let integers $k$, $n$ be given, with $n\geq0$, and set $N=2n$.  Then there exist unique values $b_{j,n}(k)\in\Q$ ($j=0,1,\ldots,n$) with the following property:

There exist $c_{0},c_{1},\ldots\in\Q$ such that the generalized Wolstenholme coefficients $b_0,b_1,\ldots,b_N$ associated with the data $[k,(c_{0},c_{1},\ldots),N]$ satisfy $b_{j}=b_{j,n}(k)$ for $0\leq j\leq n$, and
\[
b_{n+1}=b_{n+2}=\ldots=b_{2n}=0
\]
Additionally, the $c_j$ may be taken to be integers, so that the $b_{j,n}(k)$ are necessarily integers.

In other words, for $N=2n$, Theorem \ref{gwc} produces a unique congruence of the form
\begin{equation}
\label{goodp}
{kp-1\choose p-1}\equiv b_{0}+b_{1}p H(\{1\}^{1}; p-1)+\ldots+b_{n}p^{n} H(\{1\}^{n}; p-1)\pmod{p^{2n+3}}
\end{equation}
with $b_{i}\in\Z$, which holds for all odd primes $p\neq 2n+3$.

In the case $p=2n+3$ we have
\begin{equation}
\label{badp}
{kp-1\choose p-1}\equiv b_{0}+b_{1}p H(\{1\}^{1}; p-1)+\ldots+b_{n}p^{n} H(\{1\}^{n}; p-1)\pmod{p^{2n+2}}
\end{equation}
Additionally, for odd $p\leq2n+1$, we have
\[
{kp-1\choose p-1}= b_{0}+b_{1}p H(\{1\}^{1}; p-1)+\ldots+b_{n}p^{n} H(\{1\}^{n}; p-1)
\]
\end{theorem}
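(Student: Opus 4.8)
The plan is to recast the problem as linear algebra over $\Q$ and to exploit the involution already visible in the functional equation $f_n(-1-T)=(-1)^nf_n(T)$ of Section 2. Since here $n=p-1$ is even, the coefficient formula of Theorem \ref{gwc} reads $g_j:=b_j-(k-1)^j=(Lc)_j$, where $L=I-\sigma$ and $\sigma$ is the lower-triangular operator on sequences given by $(\sigma c)_j=(-1)^j\sum_{i=0}^{j}\binom{j}{i}c_i$. A short computation identifies $\sigma$ as the coefficientwise form of the substitution $(\sigma c)(X)=\frac{1}{1+X}\,c\!\left(\frac{-X}{1+X}\right)$, and since $X\mapsto -X/(1+X)$ is an involution, $\sigma^2=I$. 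We may take $c_j=0$ for $j>2n$, so that $\sigma$ becomes a genuine involution of $\Q^{2n+1}$ (coordinates $0,\dots,2n$). Then $\operatorname{im}L=V^-$, the $(-1)$-eigenspace of $\sigma$, and the condition to be solved, $b_{n+1}=\dots=b_{2n}=0$, becomes: find $g\in V^-$ with $g_j=-(k-1)^j$ for $n+1\le j\le 2n$; the sought coefficients are then $b_j=(k-1)^j+g_j$ for $0\le j\le n$.

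Both the existence of such a $g$ and the uniqueness of $b_0,\dots,b_n$ follow once I show that the restriction map $\rho\colon V^-\to\Q^{n}$ onto the coordinates $n+1,\dots,2n$ is an isomorphism. A trace computation gives $\dim V^-=n$: writing $V^+$ for the $(+1)$-eigenspace, $\operatorname{tr}\sigma=\sum_{j=0}^{2n}(-1)^j=1=\dim V^+-\dim V^-$, while $\dim V^++\dim V^-=2n+1$. As the source and target of $\rho$ have equal dimension $n$, it suffices to prove $\rho$ is injective.

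This injectivity is the heart of the argument. Suppose $g\in V^-$ has $g_{n+1}=\dots=g_{2n}=0$, so $g(X)=\sum_{j=0}^{n}g_jX^j$ has degree at most $n$. Antisymmetry in coordinates $0,\dots,2n$ says exactly that the power series $h:=\sigma g+g$ satisfies $h=O(X^{2n+1})$. For a polynomial of degree $\le n$ one has $\sigma g=P/(1+X)^{n+1}$ with $\deg P\le n$, hence $h=Q/(1+X)^{n+1}$ with $\deg Q\le 2n+1$; combined with $h=O(X^{2n+1})$ this forces $Q=g_nX^{2n+1}$, i.e. $h=g_n\,X^{2n+1}/(1+X)^{n+1}$. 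On one hand $\sigma h=h$ because $\sigma^2=I$; on the other hand substituting $X\mapsto -X/(1+X)$ into this explicit $h$ gives $\sigma h=-h$. Therefore $h=0$, so $g_n=0$ and moreover $\sigma g=-g$ holds as an identity of power series. But a nonzero polynomial cannot satisfy $\frac{1}{1+X}g\!\left(\frac{-X}{1+X}\right)=-g(X)$, since clearing denominators forces $(1+X)^{\deg g+1}$ to divide a polynomial of degree $\le\deg g$. Hence $g=0$, $\rho$ is an isomorphism, and existence and uniqueness over $\Q$ follow. I expect this step to be the main obstacle; everything else is bookkeeping.

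It remains to establish integrality and to read off the three congruences. For integrality I will use the integer basis $\phi_m(X)=X^{2m+1}/(1+X)^{m+1}$ of $V^-$ (one checks $\sigma\phi_m=-\phi_m$, and $\phi_m$ has integer coefficients); writing $g=\sum_{m=0}^{n-1}a_m\phi_m$, the coefficients $a_m$ are recovered from the prescribed integer top values $-(k-1)^j$ by inverting an explicit $n\times n$ matrix of binomial coefficients which I verify to be unimodular, so the $a_m$, and hence all $g_j$, are integers. Since $b_j=(k-1)^j+(Lc)_j$ with $L$ an integer matrix and $c$ chosen integral, each $b_{j,n}(k)$ is an integer. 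Finally, with $N=2n$ and the $c_i$ integral (so no prime divides a denominator), Theorem \ref{gwc} applies directly: for $p\ge 2n+5$ we have $N\le p-4$ with $N$ even, giving $E_N\equiv 0\pmod{p^{2n+3}}$ and hence \eqref{goodp}; for $p=2n+3=N+3$ case (ii) gives $E_N\equiv 0\pmod{p^{2n+2}}$ and hence \eqref{badp}; and for odd $p\le 2n+1$ we have $N\ge p-1$, so case (iv) gives $E_N=0$ and the stated equality.
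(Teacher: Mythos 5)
Your existence--uniqueness argument over $\Q$ is correct and takes a genuinely different route from the paper's. The paper (Lemmas \ref{bindet} and \ref{iso}) works with the explicit matrix $A_n$, bounds the rank of $\hat V_{2n}$ by identifying its rows with polynomials $T^j-(-1-T)^j$ antisymmetric under $T\mapsto -1-T$, and then gets surjectivity of the projection $\pi$ from unimodularity of a binomial matrix (proved by an $LU$ factorization via Vandermonde convolution); bijectivity of $\pi$ falls out of combining the two bounds. You instead realize the coefficient map as $I-\sigma$ for the substitution involution $(\sigma c)(X)=\tfrac{1}{1+X}c\left(\tfrac{-X}{1+X}\right)$, get $\dim V^-=n$ by a one-line trace computation, and prove injectivity of $\rho$ directly by the rational-function argument forcing $h=g_n X^{2n+1}/(1+X)^{n+1}$ and then $\sigma h=h$ versus $\sigma h=-h$. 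I checked these computations (including that lower-triangularity of $\sigma$ is what makes the truncation to $\Q^{2n+1}$ legitimate, a point you should state explicitly) and they are right; your injectivity proof is a clean substitute for the paper's surjectivity-plus-rank-bound, and your derivation of the three congruence cases from Theorem \ref{gwc} coincides with the paper's.

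There is, however, a genuine gap in the integrality part. The theorem asserts that \emph{the $c_j$ may be taken to be integers}, and you lean on this ("with $L$ an integer matrix and $c$ chosen integral") when applying Theorem \ref{gwc} at all odd primes; but nothing in your argument produces an integral $c$. What you show (granting the unimodularity of the matrix $\left(\pm\binom{j-m-1}{m}\right)$, which you assert but do not prove -- it is true, but needs an argument analogous to Lemma \ref{bindet}) is that the optimal $g\in V^-$ has integer coordinates, which yields $b_{j,n}(k)\in\Z$ directly; it does not yield $g\in(I-\sigma)\Z^{2n+1}$. The canonical preimage $c=g/2$ is only $2$-integral, and in general for an integral involution the image of $I-\sigma$ on integral vectors can miss integral anti-invariant vectors (on a sign summand of $\Z^{2n+1}$ as a $\Z[\sigma]$-module, $I-\sigma$ acts by multiplication by $2$), so integrality of $c$ is a real assertion requiring proof. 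Two fixes: (a) observe that $c=g/2$ has denominators dividing $2$ only, which already suffices for Theorem \ref{gwc} at every odd prime, salvaging all three congruences and the integrality of the $b_j$ (though not the literal integer-$c_j$ claim); or (b) prove, as the paper does in Lemma \ref{iso}, that the map $(c_0,\ldots,c_{n-1})\mapsto\bigl(((I-\sigma)c)_{n+1},\ldots,((I-\sigma)c)_{2n}\bigr)$ on vectors supported in coordinates $0,\ldots,n-1$ is given by a unimodular integer matrix (Lemma \ref{bindet}, $\det\left(\binom{b+i}{j}\right)=1$), which produces genuinely integral $c_j$ realizing the optimal coefficients.
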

We will defer the proof of this result, first showing that it gives an implication between the two uniqueness conjectures we have made.

%*****************
%Proposition 4.2
%*****************
\begin{proposition}\label{uniqimp}
The  Strong Uniquenss Conjecture \ref{uniq} implies  
the Uniqueness Conjecture \ref{cj-main}.
\end{proposition}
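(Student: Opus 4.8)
The plan is to run the hypothesis of the Uniqueness Conjecture \ref{cj-main} through the Strong Uniqueness Conjecture \ref{uniq} at the modulus $m = 2n+2$, and then close with the uniqueness clause of Theorem \ref{MT}. The entire argument is bookkeeping; the only point demanding care is the alignment of the modulus with the cutoff $\psi$.

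First I would record the numerical coincidence that drives everything: for $m = 2n+2$, which is even, the function in Conjecture \ref{uniq} gives $\psi(m) = m - 2 = 2n = N$, where $N = 2n$ is precisely the truncation index governing the optimized congruences of Theorem \ref{MT}. Thus the Strong Uniqueness Conjecture, applied at this modulus, will pin down all of the generalized Wolstenholme coefficients $b_0, \ldots, b_N$ produced by Theorem \ref{gwc}, which is exactly the range controlled by Theorem \ref{MT}.

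Next, starting from arbitrary $b_0, \ldots, b_n \in \Q$ satisfying the hypothesis of Conjecture \ref{cj-main}---the congruence holding $\bmod~p^{2n+2}$ for all sufficiently large $p$ (equivalently, for all but finitely many $p$)---I would set $a_j = b_j$ for $0 \le j \le n$ and $a_j = 0$ for $j > n$, and apply Conjecture \ref{uniq} with $m = 2n+2$. It furnishes rationals $c_0, c_1, \ldots$ such that the coefficients $b'_j$ defined from the $c_i$ via \eqref{defb} satisfy $a_j = b'_j$ for $0 \le j \le 2n$. Splitting this into the two relevant ranges yields $b'_j = b_j$ for $0 \le j \le n$, and $b'_j = 0$ for $n+1 \le j \le 2n$.

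Finally, I would observe that the data $[k, (c_0, c_1, \ldots), 2n]$ now produces generalized Wolstenholme coefficients with $b'_{n+1} = \cdots = b'_{2n} = 0$, which is exactly the defining property of the optimized congruence singled out in Theorem \ref{MT}. The uniqueness assertion of that theorem then forces $b'_j = b_{j,n}(k)$ for $0 \le j \le n$; combining with $b'_j = b_j$ from the previous step gives $b_j = b_{j,n}(k)$ for all $0 \le j \le n$, which is the conclusion of Conjecture \ref{cj-main}. The main (and essentially only) obstacle is the verification that $\psi(2n+2) = 2n$, so that the Strong Uniqueness Conjecture controls the coefficients all the way through index $2n$ rather than merely through index $n$; without the vanishing of $b'_{n+1}, \ldots, b'_{2n}$, the uniqueness clause of Theorem \ref{MT} could not be invoked.
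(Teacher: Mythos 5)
Your proposal is correct and follows essentially the same route as the paper's proof: apply the Strong Uniqueness Conjecture at modulus $m=2n+2$ (where $\psi(2n+2)=2n$) to obtain data $c_0,c_1,\ldots$ whose associated generalized Wolstenholme coefficients agree with the given $b_j$ for $j\leq n$ and vanish for $n+1\leq j\leq 2n$, then invoke the uniqueness clause of Theorem \ref{MT}. Your explicit verification that $\psi(2n+2)=2n$ makes transparent a step the paper leaves implicit, but the argument is the same.
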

\begin{proof}
 Let $n\geq0$, $k$ be given, and suppose $b_0,\ldots,b_n\in\Q$ are taken such that the congruence
\[
{kp-1\choose p-1}\equiv1+\sum_{j=1}^nb_jp^j\H(\{1\}^j;p-1)\pmod{p^{2n+2}}
\]
holds for all sufficiently large $p$.  Suppose the Strong Uniqueness
Conjecture \ref{uniq} is true, and apply it with $m=2n+2$, to find that there are $c_0,c_1,\ldots\in \Q$ such that
\[
b_j=(k-1)^j+c_j+(-1)^{j+1}\sum_{i=0}^j{j\choose i}c_i
\]
for $j=0,1,\ldots,n$, and
\[
(k-1)^j+c_j+(-1)^{j+1}\sum_{i=0}^j{j\choose i}c_i=0
\]
for $j=n+1,n+2,\ldots,2n$.  Now, the uniqueness statement of Theorem \ref{MT} above says that $b_j=b_{j,n}(k)$ for $j=0,1,\ldots,n$.
\end{proof}

For the proof of Theorem \ref{MT},  we need some preliminary definitions and lemmas.
%*****************
%Definition 4.3
%*****************
\begin{definition}
\label{defV}
{\em
Fix integers $N$, $k$, with $N\geq0$. Define $V_{N,k}\subset \Z^{N+1}$ to be the set
\[
V_{N,k} :=\left\{(b_0,\ldots,b_N):\exists c_0,c_1,\ldots\in\Z\text{ s.t.\ }b_j=(k-1)^j+c_j+(-1)^{j+1}\sum_{i=0}^j {j\choose i}c_i\right\}
\]
In other words $V_{N,k}$ is the set of {\em generalized Wolstenholme coefficients} corresponding to integer data. We similarly define $V_{N,k}^\Q\subset\Q^{N+1}$ to be
\[
V_{n,K}^\Q :=\left\{(b_0,\ldots,b_N):\exists c_0,c_1,\ldots\in\Q\text{ s.t.\ }b_j=(k-1)^j+c_j+(-1)^{j+1}\sum_{i=0}^j {j\choose i}c_i\right\},
\]
the set of generalized Wolstenholme coefficients corresponding to rational data.}
\end{definition}
The inclusion $V_{N,k}\hookrightarrow V_{N,k}^\Q$ induces an isomorphism
\[
V_{N,k}\otimes\Q\cong V_{N,k}^\Q
\]
of affine spaces over $\Q$.

We have that $V_{N,k}$ is a coset of the subgroup
\[
\hat{V}_{N}:=\left\{(b_0,\ldots,b_N):\exists c_0,c_1,\ldots\in\Z\text{ s.t.\ }b_j=c_j+(-1)^{j+1}\sum_{i=0}^j {j\choose i}c_i\right\}\subseteq\Z^N
\]
Note that $\hat{V}_N$ is independent of $k$.  We then have the following:

%*****************
%Proposition 4.4
%*****************
\begin{proposition}
\label{swap}
For all integers $N$, $k$, with $N\geq0$, we have $V_{N,k}=V_{N,1-k}$.
\end{proposition}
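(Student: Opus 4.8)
The plan is to exploit the coset description of $V_{N,k}$ recorded immediately before the statement. Set $v_k := \big(1,(k-1),(k-1)^2,\ldots,(k-1)^N\big)\in\Z^{N+1}$ and let $L\colon\Z^{N+1}\to\Z^{N+1}$ be the linear map
\[
(Lc)_j := c_j + (-1)^{j+1}\sum_{i=0}^j{j\choose i}c_i .
\]
Since $b_j$ depends only on $c_0,\ldots,c_j$, the defining formula for the $b_j$ shows at once that $V_{N,k}=v_k+\mathrm{im}(L)=v_k+\hat{V}_N$, where $\hat{V}_N=\mathrm{im}(L)\subseteq\Z^{N+1}$ is independent of $k$. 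Thus $V_{N,k}$ and $V_{N,1-k}$ are cosets of the single subgroup $\hat{V}_N$, and two such cosets coincide if and only if the difference of their base points lies in $\hat{V}_N$. The whole proposition therefore reduces to the single membership assertion $v_k-v_{1-k}\in\hat{V}_N$. Here I record that replacing $k$ by $1-k$ replaces $k-1$ by $(1-k)-1=-k$, so $v_{1-k}=\big((-k)^j\big)_{j=0}^N$.

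It then suffices to produce integers $c_0,\ldots,c_N$ with $(Lc)_j=(k-1)^j-(-k)^j$ for every $0\le j\le N$. I would take $c_i:=(k-1)^i$, which are integers because $k\in\Z$. The binomial theorem gives $\sum_{i=0}^j{j\choose i}(k-1)^i=(1+(k-1))^j=k^j$, and since $(-1)^{j+1}k^j=-(-k)^j$ we obtain
\[
(Lc)_j=(k-1)^j+(-1)^{j+1}k^j=(k-1)^j-(-k)^j,
\]
which is exactly the $j$-th coordinate of $v_k-v_{1-k}$. Hence $v_k-v_{1-k}=Lc\in\hat{V}_N$, and $V_{N,k}=V_{N,1-k}$ follows.

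There is no genuine obstacle: the only nonroutine step is selecting the combination $c_i=(k-1)^i$, and even this is dictated by the functional equation $f_n(-1-T)=(-1)^n f_n(T)$ of Section 2, whose even case $f_n(-1-T)=f_n(T)$ governs the definition of $\hat{V}_N$ (which uses the sign $(-1)^{j+1}$). Indeed, $k\mapsto 1-k$ is the substitution $T\mapsto -1-T$ applied to the base point $T=k-1$, so evaluating $f_n(-k)=f_n(k-1)$ and expanding via \eqref{harmf} yields $\sum_j\big[(k-1)^j-(-k)^j\big](n+1)^j\H(\{1\}^j;n)=0$. Thus $v_k-v_{1-k}$ is the coefficient vector of a genuine linear relation among the $(n+1)^j\H(\{1\}^j;n)$, which explains on structural grounds why it should lie in $\hat{V}_N$; the explicit $c_i$ above confirm this directly and pleasantly bypass any need to argue that $\mathrm{im}(L)$ exhausts the full lattice of such relations.
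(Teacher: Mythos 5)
Your proof is correct and is essentially the paper's argument: the paper likewise reduces to the coset structure over $\hat{V}_N$ and exhibits the common element $\bigl((-k)^j\bigr)_j$ via the choice $c_j=-(k-1)^j$, which is your computation with $c_i=(k-1)^i$ up to a sign (showing the base points differ by an element of $\hat{V}_N$ rather than exhibiting a point in the intersection). The binomial-theorem evaluation $\sum_{i=0}^j\binom{j}{i}(k-1)^i=k^j$ is identical in both.
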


This is not surprising, as
\[
{kp-1\choose p-1}={(1-k)p-1\choose p-1}
\]
holds for all odd primes $p$ and all $k$.
\begin{proof}
As $V_{N,k}$ and $V_{N,1-k}$ are cosets of the same subgroup $\hat{V}_N\leq \Z^{N+1}$, equality will follow if we can show $V_{N,k}\cap V_{N,1-k}\neq\phi$.

Taking $c_0=c_1=\ldots=0$, we see $(1,(-k),(-k)^2,\ldots,(-k)^N)\in V_{N,1-k}$.  To see that this element is also in $V_{N,k}$, set $c_j=-(k-1)^j$.  Then
\begin{eqnarray*}
b_j&=&(k-1)^j+c_j+(-1)^{j+1}\sum_{i=0}^j{j\choose i}c_i\\
&=&(k-1)^j-(k-1)^j+(-1)^j\sum_{i=0}^j{j\choose i}(k-1)^i\\
&=&(-k)^j
\end{eqnarray*}
\end{proof}

%******************
% Lemma 4.5
%*****************

\begin{lemma}
\label{bindet}
For positive integers $b$, $n$, let $M_{n,b}$ be the $n\times n$ matrix 
\[
M_{n,b}=\left({b+i\choose j}\right)_{0\leq i,j<n}
\]
Then $\det{M_{n,b}}=1$.
\end{lemma}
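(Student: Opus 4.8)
The plan is to show that the matrix $M_{n,b} = \left(\binom{b+i}{j}\right)_{0\le i,j<n}$ is obtained from the identity matrix by a sequence of unimodular operations, so that its determinant is $1$. The key structural fact I would exploit is the Pascal/Vandermonde-type recurrence $\binom{b+i}{j} = \binom{b+i-1}{j} + \binom{b+i-1}{j-1}$, which expresses each row in terms of the row above it plus a shift. This suggests an approach by induction on $n$, or equivalently an explicit $LU$-type factorization of $M_{n,b}$ into triangular matrices with $1$'s on the diagonal.

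Concretely, first I would set up the row operations: for $i$ running from $n-1$ down to $1$, replace row $i$ by (row $i$) $-$ (row $i-1$). Since the entries of row $i$ are $\binom{b+i}{j}$, the Pascal recurrence gives that the new entry in position $(i,j)$ is $\binom{b+i-1}{j-1}$, which is a ``shifted'' binomial-coefficient row. Because each such operation subtracts one row from an adjacent row, it is unimodular (determinant $1$), so it does not change $\det M_{n,b}$. After performing these operations, the new matrix has its top row unchanged — namely $\binom{b}{j}$ for $0\le j<n$, whose leading entry is $\binom{b}{0}=1$ — while the lower-right $(n-1)\times(n-1)$ block has the form of a shifted version of $M_{n-1,b}$.

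The cleanest way to organize the induction is to observe that after the row reduction, column $0$ becomes $(1,0,0,\ldots,0)^{T}$: the top entry is $\binom{b}{0}=1$ and every lower entry $\binom{b+i-1}{-1}=0$ vanishes by the convention that $\binom{m}{-1}=0$. Expanding the determinant along this first column then reduces the problem to the determinant of the lower-right $(n-1)\times(n-1)$ minor, whose $(i,j)$ entry is $\binom{b+i}{j}$ for $1\le i,j<n$ — that is, precisely $M_{n-1,b}$ after reindexing. By the inductive hypothesis this determinant equals $1$, completing the step. The base case $n=1$ is immediate since $M_{1,b}=(\binom{b}{0})=(1)$.

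I expect the main obstacle to be purely bookkeeping rather than conceptual: one must verify carefully that the reindexing after the column expansion genuinely returns the matrix $M_{n-1,b}$ (and not some variant such as $M_{n-1,b+1}$), and that the boundary conventions for binomial coefficients — in particular $\binom{m}{-1}=0$ and $\binom{b}{0}=1$ for all $b$ — are being applied consistently at the edges of the matrix. Once the indices are tracked correctly, every row operation is manifestly determinant-preserving and the induction closes cleanly, giving $\det M_{n,b}=1$ for all positive integers $b,n$.
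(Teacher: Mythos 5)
Your proof is correct, but it takes a genuinely different route from the paper's. The paper proves the lemma by a single global factorization: it sets $L_n=\left({i\choose j}\right)_{0\le i,j<n}$ (unipotent lower-triangular) and $U_{n,b}=\left({b\choose j-i}\right)_{0\le i,j<n}$ (unipotent upper-triangular) and observes that $M_{n,b}=L_nU_{n,b}$, this identity being entry-by-entry exactly the Vandermonde convolution ${b+i\choose j}=\sum_{k=0}^{j}{i\choose k}{b\choose j-k}$; the determinant is then $1$ as a product of two determinants equal to $1$. You instead induct on $n$ using only Pascal's rule: subtracting adjacent rows from the bottom up (so each subtraction uses a not-yet-modified row) turns the $(i,j)$ entry, for $i\ge 1$, into ${b+i\choose j}-{b+i-1\choose j}={b+i-1\choose j-1}$, makes the first column $(1,0,\ldots,0)^{T}$, and cofactor expansion leaves the minor $\left({b+i-1\choose j-1}\right)_{1\le i,j\le n-1}$, which under the shift $i\mapsto i-1$, $j\mapsto j-1$ is exactly $M_{n-1,b}$. (One small slip in your write-up: you describe that minor as having entries ${b+i\choose j}$ for $1\le i,j<n$, whereas the entries are ${b+i-1\choose j-1}$; but your stated conclusion --- that the minor is $M_{n-1,b}$ and not, say, $M_{n-1,b+1}$ --- is the correct one, precisely the bookkeeping point you flagged.) The two arguments are close cousins: fully iterating your row reduction on the nested minors transforms $M_{n,b}$ into the upper unitriangular matrix $U_{n,b}$, so your induction is in effect an unwinding of the paper's $LU$ decomposition into elementary, manifestly determinant-preserving operations. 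What each buys: your route is more elementary, needing nothing beyond Pascal's identity and column expansion, while the paper's one-line factorization yields strictly more information --- an explicit unipotent $LU$ decomposition of $M_{n,b}$, valid even with $b$ an indeterminate --- at the cost of invoking the Vandermonde convolution identity.
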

\begin{proof}
Define $n\times n$ matrices $L_{n}=({i \choose j})$, $U_{n,b}=({b\choose j-i})$.  $L_{n}$ is unipotent lower-triangular and $U_{n,b}$ is unipotent upper-triangular, so both have determinant 1.  We claim that $M_{n,b}=L_{n}U_{n,b}$, so that $\det{M_{n,b}}=\det{L_n}\det{U_{n,b}}=1$.  Looking at the entry in slot $(i,j)$, this equality reduces to the Vandermonde convolution identity
\[
{b+i\choose j}=\sum_{k=0}^j{i\choose k}{b\choose j-k}
\]

\end{proof}

%******************
% Lemma 4.6
%*****************
\begin{lemma}
\label{iso}
For all non-negative integers $n$, $\hat{V}_{2n}$ is a free $\Z$-module of rank $n$.  Additionally, the map $\pi:\hat{V}_{2n}\to\Z^n$ given by
\[
\pi(b_0,\ldots,b_{2n})=(b_{n+1},\ldots,b_{2n})
\]
is an isomorphism
\end{lemma}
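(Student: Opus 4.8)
The plan is to deduce both assertions from two facts: that the coordinate projection $\pi$ maps $\hat V_{2n}$ \emph{onto} $\Z^n$, and that $\hat V_{2n}$ has rank at most $n$. Indeed, a surjection from a free $\Z$-module of rank $\le n$ onto $\Z^n$ forces the rank to be exactly $n$ and the map to be an isomorphism (after tensoring with $\Q$ a surjection needs rank $\ge n$; and a surjective endomorphism $\Z^n\to\Z^n$ is an isomorphism, since in Smith normal form all elementary divisors must be units). So it suffices to establish the rank bound and the surjectivity separately.

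First I would record the hidden linear relations satisfied by the coefficient vectors. Write $b_j=c_j+(-1)^{j+1}\sum_{i=0}^j{j\choose i}c_i$ and set $\hat C(x)=\sum_j c_j x^j/j!$, $\hat b(x)=\sum_j b_j x^j/j!$. The binomial transform gives $\hat b(x)=\hat C(x)-e^{-x}\hat C(-x)$, so with $h(x):=e^{x/2}\hat C(x)$ one has $e^{x/2}\hat b(x)=h(x)-h(-x)$, which is an odd power series. Reading off its even-degree coefficients yields, for every even $j$ with $0\le j\le 2n$, the identity
\[
U_j(b):=\sum_{l=0}^{j}{j\choose l}2^{l}b_{l}=0 ,
\]
an identity in the integers $c_i$, hence valid on all of $\hat V_{2n}$ (the case $j=0$ is just $b_0=0$). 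The $n+1$ linear functionals $U_0,U_2,\ldots,U_{2n}$ on $\Z^{2n+1}$ are triangular, the leading term of $U_j$ being $2^jb_j$, so they are linearly independent; thus $\hat V_{2n}$ lies in their common kernel, a subgroup of $\Z^{2n+1}$ of rank $2n+1-(n+1)=n$. Being a subgroup of a free abelian group, $\hat V_{2n}$ is free, of rank at most $n$.

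For surjectivity I would exhibit an explicit section. Take $c_n=c_{n+1}=\cdots=0$ and let $c_0,\ldots,c_{n-1}$ be free. For $n+1\le j\le 2n$ every index $i\le n-1$ satisfies $i<j$, so the coefficient of $c_i$ in $b_j$ is exactly $(-1)^{j+1}{j\choose i}$, and $b_j=\sum_{i=0}^{n-1}(-1)^{j+1}{j\choose i}c_i$. Writing $j=n+1+i'$ with $0\le i'\le n-1$, the associated $n\times n$ matrix is $\left((-1)^{j+1}{n+1+i'\choose i}\right)_{i',\,i}$; pulling the row signs out and invoking Lemma \ref{bindet} with $b=n+1$ gives determinant $\pm\det\left({n+1+i'\choose i}\right)=\pm 1$. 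Hence $(c_0,\ldots,c_{n-1})\mapsto(b_{n+1},\ldots,b_{2n})$ is a $\Z$-isomorphism onto $\Z^n$, so $\pi$ is surjective. Together with the rank bound this proves the lemma; the case $n=0$ is trivial.

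The main obstacle is locating the relations $U_j(b)=0$ for even $j$, since these are what pin the rank of $\hat V_{2n}$ down to $n$ (the surjectivity half is routine once one has the right section). The delicate points are to verify the generating-function identity cleanly, to confirm that the $U_j$ are genuinely $\Z$-coefficient functionals on $\Z^{2n+1}$ (so that, despite the factors $2^l$, no denominators appear), and to notice that zeroing out the \emph{top} variables $c_n,\ldots,c_{2n-1}$ rather than the bottom ones is precisely what turns the section matrix into a shifted Pascal matrix to which Lemma \ref{bindet} applies.
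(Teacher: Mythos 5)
Your proof is correct, and it has the same two-part architecture as the paper's: a rank bound $\mathrm{rank}(\hat V_{2n})\le n$ plus surjectivity of $\pi$ via an explicit section, combined by the same soft argument (a surjection onto $\Z^n$ from a free module of rank $\le n$ is an isomorphism). The surjectivity half is essentially identical to the paper's: it also sets $c_n=c_{n+1}=\cdots=0$, observes that $(c_0,\ldots,c_{n-1})\mapsto(b_{n+1},\ldots,b_{2n})$ is given up to signs by the shifted Pascal matrix $\bigl(\binom{n+1+i'}{i}\bigr)$, and invokes Lemma \ref{bindet} with $b=n+1$. Where you genuinely diverge is the rank bound. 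The paper works on the image side: identifying $(b_0,\ldots,b_{2n})$ with a polynomial, the rows of the matrix of $(c_j)\mapsto(b_j)$ are $T^j-(-1-T)^j$, so the row span lies in the space of $f$ of degree $\le 2n$ with $f(T)=-f(-1-T)$, which is spanned by the odd powers $(T+\tfrac12),(T+\tfrac12)^3,\ldots,(T+\tfrac12)^{2n-1}$ and hence has dimension $n$. You work on the dual side, extracting from the binomial-transform identity $\hat b(x)=\hat C(x)-e^{-x}\hat C(-x)$ (equivalently, the oddness of $e^{x/2}\hat b(x)$) the $n+1$ integral, triangular, hence independent functionals $U_j(b)=\sum_{l\le j}\binom{j}{l}2^l b_l$ for even $j$, which vanish on $\hat V_{2n}$; the identity checks out (e.g.\ $U_0$ gives $b_0=0$ and $U_2$ gives $b_1+b_2=0$, both verifiable directly from the defining formula for the $b_j$). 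These are dual manifestations of the same symmetry, since the oddness of $h(x)-h(-x)$ under $x\mapsto -x$ is exactly the paper's antisymmetry under $T\mapsto -1-T$; neither is more elementary, but each buys something. Your route produces explicit integer relations cutting out the saturation of $\hat V_{2n}$ in $\Z^{2n+1}$, which are reusable: translated to the coset $V_{2n,k}$ they read $\sum_l\binom{j}{l}2^l b_{l,n}(k)=(2k-1)^j$, a special case of the $n$-independent linear relations among the $b_{j,n}(T)$ proved in Section \ref{sec5}. The paper's route needs no generating functions, instead recycling the functional equation $f_n(-1-T)=(-1)^n f_n(T)$ already established in Section 2 (and, incidentally, your EGF computation silently corrects a typo in the paper, which lists $(T+\tfrac12)^2$ among the spanning set where only odd powers belong).
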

\begin{proof}
Here 
%An element 
$(b_0,\ldots,b_{2n})\in\hat{V}_{2n}$ is determined by the values of $c_j$ for $0\leq j\leq 2n$. We therefore have a surjective map $\varphi_{n}: \Z^{2n+1}\to\hat{V}_{2n}\leq\Z^{2n+1}$, taking $(c_{0},\ldots,c_{2n})$ to $(a_{0},\ldots,a_{2n})$ with
\[
a_{j}=c_{j}+(-1)^{j+1}\sum_{i=0}^{j}{j\choose i}c_{i}
\]

In other words the matrix representing $\varphi_n$ (with respect to the standard basis on $\Z^{2n+1}$) is given by
\[
A_n:=\left(\delta_{i,j}+(-1)^{j+1}{j\choose i}\right)_{0\leq i,j\leq 2n}
\]

If we identify row vectors of length $2n+1$ with the set of polynomials of degree at most $2n$ via the identification $(a_{0},\ldots,a_{2n})\leftrightarrow a_{0}+a_{1}T+\ldots+a_{2n}T^{2n}$, then the $j$-th row of $A_n$ is identified with the polynomial
\[
T^j-(-1-T)^j
\]
This means that the row span of $A_n$ is contained in the set of polynomials $f(T)$ satisfyint $f(T)=-f(-1-T)$.  Such polynomials can be written as $\Q$-linear combinations of $T+\frac{1}{2},(T+\frac{1}{2})^2,\ldots,(T+\frac{1}{2})^{2n-1}$. It follows that rank$(\hat{V}_{2n})=\text{rank}(A_n)\leq n$.

Next let $i:\Z^{n}\to\Z^{2n+1}$, $(x_{0},\ldots,x_{n-1})\mapsto (x_{0},\ldots,x_{n-1},0,0,\ldots,0)$.  We have $\pi\circ\varphi_{n}\circ i(x_{0},\ldots,x_{n-1})=(y_{1},\ldots,y_{n})$, where
\[
y_{j}=(-1)^{n+j}\sum_{i=0}^{n-1}{n+1+j\choose i}x_{i}
\]
By Lemma \ref{bindet}, this map is bijective.  It follows that $\pi$ is surjective.  Since rank$(\hat{V}_{2n})\leq n = \mathrm{rank}(\Z^n)$, we must have that rank$(\hat{V}_{2n})=n$, and $\pi$ is bijective.
\end{proof}

%******************
%Proof of theorem
%*****************
\begin{proof}[Proof of Theorem \ref{MT}]
We need to show that there is a unique element of the form
\[
(b_0,\ldots,b_n,0,0,\ldots,0)\in V_{2n,k}^\Q,
\]
and that $b_0,\ldots,b_n\in\Z$.  It suffices to show there is a unique element of this form in $V_{2n,k}$. Because $V_{2n,k}=(1,(k-1),\ldots,(k-1)^{2n})+\hat{V}_{2n}$, this is equivalent to showing there is a unique element of the form
\[
\underline{a}=(a_0,\ldots,a_n,-(k-1)^{n+1},\ldots,-(k-1)^{2n})\in\hat{V}_{2n}
\]
This is the same as finding an $\underline{a}\in \hat{V}_{2n}$ with $\pi(\underline{a})=(-(k-1)^{n+1},\ldots,-(k-1)^{2n})$.  Such an $\underline{a}$ exists and is unique by Lemma \ref{iso}.

That the values $b_{j,n}(k)$ agree with a polynomial in $k$ will follow from Corollary \ref{recipe} below.
\end{proof}

We summarize the recipe for constructing the coefficients $b_{j,n}(k)$ given in Theorem \ref{MT}. It will follow that these coefficients are interpolated by a polynomial $b_{j,n}(T)$.

%******************
% Theorem 4.7
%*****************

\begin{theorem}
\label{recipe}
For fixed integers $0\leq j\leq n$, the coefficients $b_{j,n}(k)$ given in Theorem \ref{MT} are values of a polynomial $b_{j,n}(T)$ at $T=k$, which is of degree at most $2n$. This polynomial can be computed explicitly as follows.

\noindent Let $M_{n}$ be the $n\times n$ matrix 
\[
M_n=\left[(-1)^{n+i}{n+1+i\choose j}\right]_{0\leq i,j\leq n-1}
\]
Let $D_{n}$ be the $(n+1)\times n$ matrix
\[
D_n=\left[(-1)^{i+1}{i\choose j}+\delta_{i,j}\right]_{\substack{0\leq i\leq n\\0\leq j\leq n-1}}
\]
where $\delta_{i,j}$ is the Kronecker delta.  Then $M_{n}$ is invertible over the integers, and we have the matrix equation
\begin{equation}
\label{eq1}
\left(  \begin{array}{c}
b_{0,n}(k)\\
b_{1,n}(k)\\
\vdots\\
b_{n,n}(k)\\
\end{array} \right)
=
\left(  \begin{array}{c}
(k-1)^{0}\\
(k-1)^{1}\\
\vdots\\
(k-1)^{n}\\
\end{array} \right)
-
D_{n}\cdot M_{n}^{-1}\cdot
\left(  \begin{array}{c}
(k-1)^{n+1}\\
(k-1)^{n+2}\\
\vdots\\
(k-1)^{2n}\\
\end{array} \right)
\end{equation}
In particular this shows that $b_{j,n}(k)$ is given by a polynomial in $k$, of degree at most $2n$, having integer coefficients.
\end{theorem}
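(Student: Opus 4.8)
The plan is to make explicit the construction of the coefficients $b_{j,n}(k)$ carried out in the proof of Theorem \ref{MT}, reading the matrices $M_n$ and $D_n$ directly off the maps $\pi$, $\varphi_n$, $i$ appearing in the proof of Lemma \ref{iso}. Recall from that proof that the coefficients are determined as follows: one has $b_{j,n}(k)=a_j+(k-1)^j$ for $0\le j\le n$, where $\underline{a}=(a_0,\ldots,a_{2n})$ is the unique element of $\hat V_{2n}$ (equivalently of $\hat V_{2n}\otimes\Q$) with $\pi(\underline{a})=(a_{n+1},\ldots,a_{2n})=(-(k-1)^{n+1},\ldots,-(k-1)^{2n})$. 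Existence and uniqueness of such $\underline{a}$ is exactly the content of Lemma \ref{iso}, so the whole task reduces to solving for $\underline{a}$ explicitly and substituting.

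First I would use the bijection $\varphi_n\circ i\colon \Q^n\to \hat V_{2n}\otimes\Q$ of Lemma \ref{iso} to write $\underline{a}=\varphi_n(i(x))$ for a unique $x=(x_0,\ldots,x_{n-1})$; concretely this amounts to taking $c_j=x_j$ for $0\le j\le n-1$ and $c_j=0$ for $j\ge n$ in the relation $a_j=c_j+(-1)^{j+1}\sum_{i=0}^j\binom{j}{i}c_i$. Substituting these $c_j$ into the index-$(n+1+i)$ component gives $a_{n+1+i}=(-1)^{n+i}\sum_{j=0}^{n-1}\binom{n+1+i}{j}x_j=(M_nx)_i$; this is precisely the formula $y_j=(-1)^{n+j}\sum_i\binom{n+1+j}{i}x_i$ computed in the proof of Lemma \ref{iso}, after the evident relabeling. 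Hence the defining condition on $\underline{a}$ becomes the linear system $M_nx=-\big((k-1)^{n+1},\ldots,(k-1)^{2n}\big)^{T}$, so $x=-M_n^{-1}\big((k-1)^{n+1},\ldots,(k-1)^{2n}\big)^{T}$. Next I would recover $(a_0,\ldots,a_n)$ from $x$: feeding the same truncated $c_j$ into $a_i=c_i+(-1)^{i+1}\sum_{j}\binom{i}{j}c_j$ yields, for every $0\le i\le n$, the identity $a_i=\sum_{j=0}^{n-1}\big[(-1)^{i+1}\binom{i}{j}+\delta_{i,j}\big]x_j=(D_nx)_i$, where the cutoff at $j=n-1$ is harmless because $\binom{i}{j}=0$ once $j>i$. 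Combining these and adding $\big((k-1)^0,\ldots,(k-1)^n\big)^{T}$ to $(a_0,\ldots,a_n)^{T}=D_nx$ produces exactly equation \eqref{eq1}.

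Finally I would establish unimodularity of $M_n$ and draw the structural conclusions. Pulling the scalar $(-1)^{n+i}$ out of the $i$-th row gives $\det M_n=\big(\prod_{i=0}^{n-1}(-1)^{n+i}\big)\det\big[\binom{n+1+i}{j}\big]_{0\le i,j\le n-1}$, and the remaining determinant equals $\det M_{n,n+1}=1$ by Lemma \ref{bindet} with $b=n+1$; thus $\det M_n=\pm1$, so $M_n$ is invertible over $\Z$ and $M_n^{-1}$ is integral by Cramer's rule. Since $D_n$ also has integer entries, $D_nM_n^{-1}$ is an integer matrix; as each power $(k-1)^i$ with $0\le i\le 2n$ is a polynomial in $k$ of degree $i\le 2n$ with integer coefficients, formula \eqref{eq1} exhibits each $b_{j,n}(k)$ as a $\Z$-linear combination of such powers, hence a polynomial in $k$ of degree at most $2n$ with integer coefficients. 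The only genuinely delicate point in all of this is the bookkeeping of indices and signs when matching the abstract maps of Lemma \ref{iso} to the concrete $M_n$ and $D_n$: one must check that the truncation $c_j=0$ for $j\ge n$ is what the inclusion $i$ encodes, and that the binomial cutoff makes the $(n+1)\times n$ matrix $D_n$ reproduce the whole block $(a_0,\ldots,a_n)$. Everything else is linear algebra over $\Z$ together with the single determinant evaluation furnished by Lemma \ref{bindet}.
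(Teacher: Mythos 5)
Your proposal is correct and takes essentially the same approach as the paper: the paper's proof of Theorem \ref{recipe} is a one-line appeal to the construction in the proof of Theorem \ref{MT}, and you have simply unwound that construction explicitly, correctly identifying $M_n$ as the matrix of $\pi\circ\varphi_n\circ i$, $D_n$ as the matrix recovering $(a_0,\ldots,a_n)$ from the truncated data, and the unimodularity via Lemma \ref{bindet} with $b=n+1$. All the index, sign, and truncation bookkeeping (including the $i=n$ row of $D_n$, where the $j=n$ terms vanish because $c_n=0$) checks out.
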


\begin{proof}
The formula \eqref{eq1} follows from the proof of Theorem \ref{MT}. This formula implies that $b_{j,k}(k)$ is given by a polynomial in $k$, of degree at most $2n$, having integer coefficients.
\end{proof}

%******************
% Definition 4.8
%*****************
\begin{definition}
{\em For integers $j$, $n$, $k$, with $j,n\geq0$, we let $b_{j,n}(k)$ denote the coefficients arising from Theorem \ref{MT}.  We call these \emph{\optimal coefficients}.  We also denote by $b_{j,n}(T)$ the polynomial giving these coefficients, and call these \emph{\optimal polynomials}.  By convention, we take $b_{j,n}(T)=0$ for $n+1\leq j\leq 2n$, and we say that $b_{j,n}(T)$ is not defined for $j\geq 2n+1$.
}
\end{definition}

Theorem \ref{recipe} provides
  a recipe for computing the \optimal coefficients in terms of matrices involving binomial coefficients. In this sense they are like the Bernoulli numbers, which can be computed by a similar expression.

Proposition \ref{swap} says that $V_{2n,k}=V_{2n,1-k}$.  We may therefore make the substitution 
$k\leftrightarrow 1-k$ in Theorem \ref{recipe} to get the following:

%******************
% Corollary 4.9
%*****************
\begin{corollary}
\label{arith}
Let $k$, $n$ be given, and set $N=2n$.  Let $D_{n}$, $M_{n}$ be as in the statement of Theorem \ref{recipe}.  Then

\begin{equation}
\label{eq2}
\left(  \begin{array}{c}
b_{0}(k,n)\\
b_{1}(k,n)\\
\vdots\\
b_{n}(k,n)\\
\end{array} \right)
=
\left(  \begin{array}{c}
(-k)^0\\
(-k)^1\\
\vdots\\
(-k)^n\\
\end{array} \right)
-
D_{n}\cdot M_{n}^{-1}\cdot
\left(  \begin{array}{c}
(-k)^{n+1}\\
(-k)^{n+2}\\
\vdots\\
(-k)^{2n}\\
\end{array} \right)
\end{equation}

\end{corollary}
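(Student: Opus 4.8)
The plan is to deduce Corollary~\ref{arith} directly from the symmetry $k \leftrightarrow 1-k$ established in Proposition~\ref{swap}, together with the uniqueness statement of Theorem~\ref{MT}. The central observation is that Proposition~\ref{swap} forces the \optimal coefficients themselves to be invariant under $k \mapsto 1-k$; once this symmetry is in hand, Corollary~\ref{arith} is merely the substitution $k \mapsto 1-k$ applied to the recipe~\eqref{eq1}.

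First I would record the identity $b_{j,n}(1-k) = b_{j,n}(k)$ for $0 \le j \le n$. By Theorem~\ref{MT}, the tuple $(b_{0,n}(k),\ldots,b_{n,n}(k))$ is characterized as the unique vector for which $(b_{0,n}(k),\ldots,b_{n,n}(k),0,\ldots,0)$ lies in $V_{2n,k}^\Q$. Proposition~\ref{swap} gives $V_{2n,k}=V_{2n,1-k}$, hence $V_{2n,k}^\Q=V_{2n,1-k}^\Q$. The unique vector of the prescribed shape in this common set must therefore coincide whether computed with parameter $k$ or with parameter $1-k$; the former yields $b_{j,n}(k)$ and the latter yields $b_{j,n}(1-k)$, so $b_{j,n}(k)=b_{j,n}(1-k)$ holds for every integer $k$. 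Since both sides are values of the polynomial $b_{j,n}(T)$ of Theorem~\ref{recipe}, agreement at infinitely many integers upgrades this to the polynomial identity $b_{j,n}(T)=b_{j,n}(1-T)$.

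Next I would substitute $k \mapsto 1-k$ into the matrix equation~\eqref{eq1}. The right-hand side of~\eqref{eq1} is manifestly polynomial in $k$ (powers of $k-1$ multiplied by the $k$-independent matrices $D_n$ and $M_n$), so~\eqref{eq1} is an identity of polynomials in $k$ and the substitution is legitimate. On the left, each entry $b_{j,n}(k)$ becomes $b_{j,n}(1-k)$, which by the previous paragraph equals $b_{j,n}(k)$. On the right, every occurrence of $k-1$ becomes $(1-k)-1=-k$, so the column vector with entries $(k-1)^0,\ldots,(k-1)^n$ becomes the column vector with entries $(-k)^0,\ldots,(-k)^n$, and likewise the column vector with entries $(k-1)^{n+1},\ldots,(k-1)^{2n}$ becomes the one with entries $(-k)^{n+1},\ldots,(-k)^{2n}$, while $D_n$ and $M_n$ are untouched. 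This is exactly~\eqref{eq2}.

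There is no serious obstacle here: the entire content is carried by Proposition~\ref{swap} and the uniqueness of the \optimal coefficients. The one step deserving care is the logical passage from the set-equality $V_{2n,k}^\Q=V_{2n,1-k}^\Q$, together with uniqueness of the distinguished element, to the coefficient symmetry $b_{j,n}(k)=b_{j,n}(1-k)$; after that, Corollary~\ref{arith} is a purely formal substitution into~\eqref{eq1}.
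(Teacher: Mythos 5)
Your proof is correct and takes essentially the same route as the paper, which obtains Corollary \ref{arith} precisely by invoking Proposition \ref{swap} ($V_{2n,k}=V_{2n,1-k}$) and making the substitution $k\leftrightarrow 1-k$ in Theorem \ref{recipe}. You simply spell out the step the paper leaves implicit, namely that the set-equality combined with the uniqueness statement of Theorem \ref{MT} forces $b_{j,n}(k)=b_{j,n}(1-k)$, which legitimizes the substitution.
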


We can combine Theorem \ref{recipe} and Corollary \ref{arith} to obtain the following characterization of the 
\optimal polynomials $b_{j,n}(T)$:

%*****************
%Proposition 4.10
%*****************
\begin{proposition}
\label{congforb}
Fix integers $j$, $n$, with $j\leq 2n$.  The \optimal polynomial $b_{j,n}(T)\in\Z[T]$ is the unique polynomial
of degree at most $2n$  satisfying the following conditions:
\begin{enumerate}[(i)]
%\item $\deg(b_{j,n}(T))\leq 2n$
\item $b_{j,n}(T)\equiv (T-1)^j\mod{(T-1)^{n+1}}$
\item $b_{j,n}(T)\equiv (-T)^j\mod{T^{n+1}}$
\end{enumerate}
\end{proposition}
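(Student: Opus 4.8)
The plan is to prove the two assertions of the proposition --- that $b_{j,n}(T)$ satisfies (i) and (ii), and that it is the unique polynomial of degree at most $2n$ doing so --- essentially as direct corollaries of Theorem \ref{recipe} and Corollary \ref{arith}, with uniqueness reduced to an elementary coprimality argument. The conceptual content is that the ``correction terms'' appearing in formulas \eqref{eq1} and \eqref{eq2} vanish to order $n+1$ at $T=1$ and at $T=0$ respectively, so the two congruences can simply be read off.

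First I would dispose of the trivial range $n+1 \le j \le 2n$, where by convention $b_{j,n}(T)=0$. Since $j \ge n+1$, both $(T-1)^j$ and $(-T)^j$ are already divisible by $(T-1)^{n+1}$ and $T^{n+1}$ respectively, so the zero polynomial (of degree $\le 2n$) satisfies (i) and (ii) vacuously. For the main range $0 \le j \le n$, I would invoke formula \eqref{eq1} of Theorem \ref{recipe}, which writes $b_{j,n}(k)$ as $(k-1)^j$ minus the $j$-th entry of $D_n M_n^{-1}\,((k-1)^{n+1},\ldots,(k-1)^{2n})^{t}$. Because $M_n$ is invertible over $\Z$ and $D_n$ has integer entries, that entry is an integer linear combination of $(k-1)^{n+1},\ldots,(k-1)^{2n}$, hence, viewed as a polynomial in $k=T$, is divisible by $(T-1)^{n+1}$. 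This gives condition (i) at once. Condition (ii) follows identically from Corollary \ref{arith}: its correction term in \eqref{eq2} is an integer combination of $(-k)^{n+1},\ldots,(-k)^{2n}$, hence divisible by $T^{n+1}$, so $b_{j,n}(T)\equiv (-T)^j \pmod{T^{n+1}}$. The degree bound $\deg b_{j,n}\le 2n$ is already part of Theorem \ref{recipe}, so existence (with the correct degree) is complete.

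For uniqueness, suppose $f,g$ both have degree at most $2n$ and satisfy (i) and (ii). Then $f-g$ is divisible by both $(T-1)^{n+1}$ and $T^{n+1}$, and since these are coprime it is divisible by their product $(T-1)^{n+1}T^{n+1}$, of degree $2n+2$; a polynomial of degree $\le 2n$ divisible by a polynomial of degree $2n+2$ must be zero, so $f=g$. I do not anticipate a serious obstacle here, as the substance lies in Theorem \ref{recipe} and Corollary \ref{arith}; the only point deserving a moment's care is the bookkeeping that $M_n^{-1}$ and $D_n$ are integral, so that no stray denominators disturb the two congruences --- and this integrality is exactly what Theorem \ref{recipe} guarantees.
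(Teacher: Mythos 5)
Your proof is correct and takes essentially the same approach as the paper: existence is read off from formulas \eqref{eq1} and \eqref{eq2} of Theorem \ref{recipe} and Corollary \ref{arith} (with the range $n+1\leq j\leq 2n$ handled trivially), and uniqueness follows because a polynomial of degree at most $2n$ is determined by its residue modulo the degree-$(2n+2)$ product $T^{n+1}(T-1)^{n+1}$. Your coprimality/divisibility phrasing of the uniqueness step is just a direct restatement of the paper's Chinese remainder theorem argument.
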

\begin{proof}
For $n+1\leq j\leq 2n$, $b_{j,n}(T)=0$, and the result can be seen directly.  For $j\leq n$, Corollary \ref{recipe} shows that $\deg(b_{j,n}(T))\leq 2n$, and that $b_{j,n}(T)$ is equal to $(T-1)^j$ plus a $\Z$-linear combination of $(T-1)^{n+1},\ldots,(T-1)^{2n}$.  This shows that \emph{(i)}, \emph{(ii)} are satisfied.  Condition \emph{(iii)} similarly follows from Corollary \ref{arith}.

For uniqueness, the Chinese remainder theorem says that conditions \emph{(ii)}, \emph{(iii)} determine the residue class of $b_{j,n}(T)$ modulo $T^{n+1}(T-1)^{n+1}$.  There will, a fortiori, be only one polynomial with rational coefficients and degree at most $2n+1$ in this residue class. Our computations have shows that this polynomial in fact has integer coefficients and degree at most $2n$.
\end{proof}

Theorem \ref{th-main}, stated in the introduction, now follows from the combination of Theorem \ref{MT}, Theorem \ref{recipe}, and Proposition \ref{congforb}.

%********************************
% Section 5
%********************************

\section{Exceptional Congruences and Bernoulli Numbers}

%In this section 
We now  investigate the situations under which the congruences \eqref{goodp}, \eqref{badp}
 hold modulo some larger power of $p$ than given by Theorem \ref{MT}.  We term these {\em exceptional congruences}.
 In the case of Wolstenholme's theorem, we have that the exceptional congruence
\[
{2p-1\choose p-1}\equiv 1\pmod{p^4}
\]
holds if and only if $p$ divides the numerator of the Bernoulli number $B_{p-3}$ (this follows from the results of van Hamme \cite{vH00} and Glaisher \cite{Gla1900a}).  We establish a similar result, which shows that the congruences \eqref{goodp}, \eqref{badp} hold modulo an extra power of $p$ if and only if either $p|B_{p-2n-3}$, or $p|k(k-1)$.

Let non-negative integers $n$, $k$ be given, and choose $c_0,c_1,\ldots\in\Z$ so that the generalized Wolstenholme congruence associated with the data $[k,(c_0,c_1,\ldots),2n]$ is the optimal one, given by Theorem \ref{MT} (the $c_i$ are not uniquely determined by this condition).  Let $b_0,b_1,\ldots,b_{2n+2}$ be given by \eqref{defb}, so that $b_j=b_{j,n}(k)$ for $j=0,1,\ldots,n$, and $b_j=0$ for $j=n+1,n+2,\ldots,2n$. The values of $b_{2n+1}$ and $b_{2n+3}$ will depend on the choice of the $c_i$.

%*****************
%Proposition 5.1
%*****************
\begin{proposition}
Define
\[
C_n(k) :=(n+1)b_{2n+1}+b_{2n+2}
\]
Then, independent of the choice of $c_i$ giving the optimal congruence, we have $C_n(k)=k^{n+1}(k-1)^{n+1}$.
\end{proposition}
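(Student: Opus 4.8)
The plan is to prove the two assertions separately: first that $C_n(k)$ does not depend on the choice of $(c_i)$ giving the optimal congruence, and then that its common value is $k^{n+1}(k-1)^{n+1}$.

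\emph{Independence.} Two admissible choices differ by $c_i \mapsto c_i + d_i$, where the induced changes $\Delta b_j = d_j + (-1)^{j+1}\sum_{i=0}^j \binom{j}{i} d_i$ vanish for $0 \le j \le 2n$ (these are the coefficients pinned down by the optimal congruence). I would compute $(n+1)\Delta b_{2n+1} + \Delta b_{2n+2}$ directly from \eqref{defb}. The coefficient of $d_{2n+2}$ is $0$, and the coefficient of $d_{2n+1}$ is $(n+1)\cdot 2 - \binom{2n+2}{2n+1} = 2(n+1) - (2n+2) = 0$, so this combination equals $\sum_{i=0}^{2n} v_i d_i$ with $v_i = (n+1)\binom{2n+1}{i} - \binom{2n+2}{i}$. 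Using the identification from the proof of Lemma \ref{iso} (the row span of the constraint rows is exactly the space of polynomials $T^j - (-1-T)^j$, i.e. the antisymmetric polynomials of degree $\le 2n$), it suffices to show that $P(T):=\sum_{i=0}^{2n} v_i T^i$ lies in that span, equivalently that $P(T) = -P(-1-T)$. I would verify this from the closed form $P(T) = (1+T)^{2n+1}(n-T) + T^{2n+1}(n+1+T)$, obtained by subtracting the two top monomials from $(n+1)(1+T)^{2n+1} - (1+T)^{2n+2}$; the substitution $1+(-1-T) = -T$ then gives $P(-1-T) = -P(T)$ at once. Hence $\sum_i v_i d_i = 0$ and $C_n(k)$ is well defined.

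\emph{A convenient choice.} By independence I may evaluate $C_n(k)$ for the particular data with $c_i = 0$ for $i \ge n$ and $c_0,\dots,c_{n-1}$ chosen to enforce $b_{n+1} = \dots = b_{2n} = 0$; this amounts to the system $\sum_{i=0}^{n-1}\binom{n+j}{i}c_i = (1-k)^{n+j}$ for $1 \le j \le n$, which by Lemma \ref{bindet} has a unique solution with each $c_i = c_i(k)$ a polynomial of degree $\le 2n$, and by Theorem \ref{MT} the resulting congruence is the optimal one. Substituting into \eqref{defb} gives $b_{2n+1} = (k-1)^{2n+1} + \sum_{i=0}^{n-1}\binom{2n+1}{i}c_i$ and $b_{2n+2} = (k-1)^{2n+2} - \sum_{i=0}^{n-1}\binom{2n+2}{i}c_i$, whence $C_n(k) = (n+1)(k-1)^{2n+1} + (k-1)^{2n+2} + \sum_{i=0}^{n-1} v_i c_i(k)$. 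Since $\deg c_i(k) \le 2n$, this exhibits $C_n(k)$ as a polynomial in $k$ of degree exactly $2n+2$ with leading coefficient $1$.

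\emph{Vanishing and conclusion.} Near $k = 1$ every right-hand side $(1-k)^{n+j}$ is $O((1-k)^{n+1})$, so $c_i(k) = O((1-k)^{n+1})$; together with $(k-1)^{2n+1},(k-1)^{2n+2} = O((k-1)^{n+1})$ this shows $C_n(k)$ vanishes to order $\ge n+1$ at $k=1$. For the same behaviour at $k = 0$ I would use the symmetry $C_n(k) = C_n(1-k)$: the sequence $\big((-k)^j - (k-1)^j\big)_j$ is a generalized Wolstenholme correction — this is precisely the computation in the proof of Proposition \ref{swap}, taking $c_j = -(k-1)^j$ — so a sequence is admissible for parameter $k$ iff it is admissible for $1-k$, and by Proposition \ref{swap} the two parameters share the same optimal tuple $(b_0,\dots,b_{2n})$; applying the independence statement to both parameters gives $C_n(k) = C_n(1-k)$. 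Thus $C_n$ also vanishes to order $\ge n+1$ at $k=0$, so $k^{n+1}(k-1)^{n+1} \mid C_n(k)$; as both are degree-$(2n+2)$ polynomials with leading coefficient $1$, they coincide. The step I expect to be the main obstacle is the independence argument: isolating the exact combination $(n+1)b_{2n+1}+b_{2n+2}$ that is choice-invariant and certifying this via the antisymmetry of $P(T)$. Once that is in place the convenient-choice formula, the order of vanishing, and the symmetry reduction to Proposition \ref{swap} are routine.
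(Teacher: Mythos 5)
Your proof is correct, and its overall architecture coincides with the paper's: the convenient choice with $c_i=0$ for $i\ge n$ and $c_0,\dots,c_{n-1}$ solved via Lemma \ref{bindet} is exactly the paper's recipe (the matrix equation \eqref{eqextra}), yielding the same conclusions that $C_n(k)$ is monic of degree $2n+2$ and divisible by $(k-1)^{n+1}$; and the $k\leftrightarrow 1-k$ symmetry via Proposition \ref{swap}, the Chinese remainder theorem, and the final degree comparison are verbatim the paper's endgame. Where you genuinely diverge is the independence step. The paper argues structurally: the projection $V_{2n+2,k}\to V_{2n,k}$ is surjective with fibers that are $\Z$-torsors (rank $n+1$ versus rank $n$, by Lemma \ref{iso}), so it suffices to exhibit a single nontrivial perturbation fixing $(n+1)b_{2n+1}+b_{2n+2}$, namely $c_{2n+1}\mapsto c_{2n+1}+1$. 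You instead characterize the entire space of admissible perturbations as the kernel of the constraint matrix and certify that the functional $v_i=(n+1)\binom{2n+1}{i}-\binom{2n+2}{i}$ annihilates it by showing $P(T)=(1+T)^{2n+1}(n-T)+T^{2n+1}(n+1+T)$ satisfies $P(-1-T)=-P(T)$, hence lies in the row span; note this uses not just the containment of the row span in the antisymmetric polynomials but its exactness, which does follow from the rank count in Lemma \ref{iso}, so both routes rest on the same lemma. The paper's route is shorter; yours is more robust, since it needs no rank-one fiber structure and produces an explicit certificate — indeed your increments ($\Delta b_{2n+1}=2$, $\Delta b_{2n+2}=-(2n+2)$ for the unit perturbation of $c_{2n+1}$) are the corrected form of the paper's, which misstates them as $\binom{2n+1}{1}$ and $-\binom{2n+2}{2}$ (harmlessly, as those values also satisfy the invariance relation). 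One stylistic caution: state the vanishing at $k=1$ as exact divisibility — each $c_i(k)$ is a fixed $\Z$-linear combination of $(1-k)^{n+1},\dots,(1-k)^{2n}$, hence divisible by $(k-1)^{n+1}$ — rather than as an asymptotic $O((1-k)^{n+1})$ claim; for polynomials the two are equivalent, but divisibility is what the Chinese remainder step actually consumes.
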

\begin{proof}
First we show that the value of $C_n(k)$ depends only on $n$ and $k$, but not the choice of $c_i$. Let $p:V_{2n+2,k}\to V_{2n,k}$, $(b_0,\ldots,b_{2n+2})\mapsto (b_0,\ldots,b_{2n})$ be the projection map (where $V_{\cdot,k}$ is given by Definition \ref{defV}).  From the construction of the spaces $V_{2n+1,k}$, $V_{2n,k}$, $p$ is surjective. Lemma \ref{iso} says that rank$(V_{2n+2,k})=n+1$, rank$(V_{2n,k})=n$.  If follows that $U:=p^{-1}(b_0,b_1,\ldots,b_n,0,\ldots,0)$ is a $\Z$ torsor.  Therefore, if we can exhibit $(b'_0,\ldots,b'_{2n+2})\neq (b_0,\ldots,b_{2n+3})\in U$ such that $(n+1)b'_{2n+1}+b'_{2n+2}=(n+1)b_{2n+1}+b_{2n+2}$, we will be done.

If we take $c'_{i}=c_{i}$ for $i\neq 2n+1$, and $c'_{2n+1}=c_{2n+1}+1$, we will have that $b'_i=b_i$ for $i\leq 2n$, $b'_{2n+1}=b_{2n+1}+{2n+1\choose 1}$, and $b'_{2n+2}=b_{2n+2}-{2n+2\choose 2}$. It follows directly that $(n+1)b'_{2n+1}+b'_{2n+2}=(n+1)b_{2n+1}+b_{2n+2}$.

Next we show that $C_n(k)$ agrees with a polynomial in $k$.  Using the same process as in Corollary \ref{recipe}, we may solve for the data $c_0,\ldots,c_{2n}$ to give the \optimal congruence.  
We will use this data (with $c_i=0$ for $i\geq 2n+1$).  We can then compute $b_{2n+1}$, $b_{2n+2}$ in the following way:

\noindent Let $M_{n}$ be the $n\times n$ matrix 
\[
M_n=\left[(-1)^{n+i}{n+1+i\choose j}\right]_{0\leq i,j\leq n-1}
\]
Let $A_{n}$ be the $2\times n$ matrix
\[
A_n=\left[(-1)^{i+1}{i\choose j}+\delta_{i,j}\right]_{\substack{2n+1\leq i\leq 2n+2\\0\leq j\leq n-1}}
\]
where $\delta_{i,j}$ is the Kronecker delta.  Then $M_{n}$ is invertible over the integers, and we have the matrix equation
\begin{equation}
\label{eqextra}
\left(  \begin{array}{c}
b_{2n+1}\\
b_{2n+2}\\
\end{array} \right)
=
\left(  \begin{array}{c}
(k-1)^{2n+1}\\
(k-1)^{2n+2}\\
\end{array} \right)
-
A_{n}\cdot M_{n}^{-1}\cdot
\left(  \begin{array}{c}
(k-1)^{n+1}\\
(k-1)^{n+2}\\
\vdots\\
(k-1)^{2n}\\
\end{array} \right)
\end{equation}

This shows that $b_{2n+1}$, $b_{2n+2}$ are polynomials in $k$.  Moreover, $b_{2n+1}$ is equal to $(k-1)^{2n+1}$ plus a $\Z$-linear combination of $(k-1)^{n+1},\ldots,(k-1)^{2n}$, so that $b_{2n+1}$ is monic in $k$, of degree $2n+1$, and $(k-1)^{n+1}| b_{2n+1}$.  Similarly, $b_{2n+2}$ is monic of degree $2n+2$, and $(k-1)^{n+1}|b_{2n+2}$.  It follows that $C_n(k)=(n+1)b_{2n+1}+b_{2n+2}$ is monic of degree $2n+2$, with $(k-1)^{2n+1}|C_n(k)$.  Moreover, $C_n(k)$ is determined by the set $V_{2n+2,k}$, and Lemma \ref{swap} says that $V_{2n+2,k}=V_{2n+2,1-k}$.  We may therefore make the substitution $k\leftrightarrow 1-k$ to get the $k^{n+1}|C_n(k)$.  By the Chinese remainder theorem, $k^{n+1}(k-1)^{n+1}|C_n(k)$. The only monic polynomial of degree $2n+2$ which is divisible by $k^{n+1}(k-1)^{n+1}$ is $k^{n+1}(k-1)^{n+1}$, so we conclude $C_n(k)=k^{n+1}(k-1)^{n+1}$.
\end{proof}

We now consider the possibility of extra powers of $p$ in the congruences \eqref{goodp}, \eqref{badp}.
For all integers $k$, $n$ with $n\geq0$, and all odd primes $p$, define
\[
E(k,n,p):={kp-1\choose p-1}-\sum_{j=0}^n b_{j,n}(k)p^j\H(\{1\}^j)
\]

%*****************
%Proposition 5.2
%*****************
\begin{proposition}
Suppose $p\geq 2n+5$.  Then
\[
E(k,n,p)\equiv \frac{-B_{p-3-2n}}{2n+3}k^{n+1}(k-1)^{n+1}p^{2n+3}\pmod{p^{2n+4}}
\]
\end{proposition}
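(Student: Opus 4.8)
The plan is to read the result off directly from Theorem~\ref{gwc}, once the truncation error appearing there is matched with $E(k,n,p)$ and the coefficient combination in its error formula is identified with the quantity $C_n(k)$ from the preceding proposition. First I would fix integers $c_0,c_1,\ldots$ producing the optimal congruence, exactly as in the setup above, so that the associated generalized Wolstenholme coefficients satisfy $b_j=b_{j,n}(k)$ for $0\leq j\leq n$ and $b_j=0$ for $n+1\leq j\leq 2n$. With this choice the intermediate terms drop out of the truncated sum, and the error term $E_N$ of Theorem~\ref{gwc} with $N=2n$ becomes
\[
E_{2n}={kp-1\choose p-1}-\sum_{j=0}^{2n}b_jp^j\H(\{1\}^j)={kp-1\choose p-1}-\sum_{j=0}^{n}b_{j,n}(k)p^j\H(\{1\}^j)=E(k,n,p),
\]
so the two error quantities coincide.

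Next I would invoke part (i) of Theorem~\ref{gwc} in its even case. Since $p\geq 2n+5$ we have $N=2n\leq p-5\leq p-4$ and $N$ even (this is Case (i-a) of that proof), so the theorem applies and yields
\[
E_{2n}\equiv\frac{-B_{p-3-2n}}{2n+3}\left(\frac{2n+2}{2}\,b_{2n+1}+b_{2n+2}\right)p^{2n+3}\pmod{p^{2n+4}}.
\]
The $p$-integrality of $b_{2n+1}$ and $b_{2n+2}$ required to apply the theorem is automatic, since the $c_i$ were chosen to be integers. As $\frac{2n+2}{2}=n+1$, the parenthesised factor is precisely $(n+1)b_{2n+1}+b_{2n+2}=C_n(k)$. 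Substituting the value $C_n(k)=k^{n+1}(k-1)^{n+1}$ supplied by the preceding proposition then gives the claimed congruence.

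The proof is thus an assembly of prior results, and the one point genuinely requiring care is exactly the subtlety that the preceding proposition was designed to handle: the coefficients $b_{2n+1}$ and $b_{2n+2}$ each depend on the non-unique choice of the $c_i$, so neither is well defined in isolation. It is only the specific linear combination $(n+1)b_{2n+1}+b_{2n+2}$ occurring in the error formula that is independent of that choice, and identifying it with $C_n(k)=k^{n+1}(k-1)^{n+1}$ is the whole content of the step. Once this identification is in place, no additional computation is needed.
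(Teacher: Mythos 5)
Your proposal is correct and follows essentially the same route as the paper: truncate at $N=2n$ with the optimal data so that $E_{2n}=E(k,n,p)$, apply the even case of Theorem~\ref{gwc}(i) (valid since $p\geq 2n+5$ forces $2n\leq p-5$), and substitute $(n+1)b_{2n+1}+b_{2n+2}=C_n(k)=k^{n+1}(k-1)^{n+1}$ from the preceding proposition, which is exactly the well-definedness point you correctly flag. The only cosmetic difference is that you quote the error formula already stated in Theorem~\ref{gwc}, whereas the paper re-derives it from Proposition~\ref{scc} inside the proof (with a couple of typographical slips, e.g.\ $n+3$ for $2n+3$, that your version avoids).
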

\begin{proof}
By Theorem \ref{gwc}, we have
\[
E(k,n,p)\equiv b_{2n+1}p^{2n+1}\H(\{1\}^{2n+1})+b_{2n+2}p^{2n+2}\H(\{1\}^{2n+2})\hspace{-.2cm}\pmod{p^{2n+4}}
\]
Using Proposition \ref{scc}, we can write
\begin{eqnarray*}
E(k,n,p)&\equiv& -b_{2n+1}p^{2n+3}\frac{2n+2}{2(2n+3)}B_{p-3-2n}-b_{2n+2}p^{2n+3}\frac{B_{p-3-2n}}{2n+3}\\
&\equiv&\frac{-B_{p-3-2n}}{n+3}\left((n+1)b_{2n+1}-b_{2n+2}\right)\\
&\equiv&\frac{-B_{p-3-2n}}{n+3}k^{n+1}(k-1)^{n+1}\pmod{p^{2n+4}}
\end{eqnarray*}
\end{proof}

%*****************
%Proposition 5.3
%*****************
\begin{proposition}
Suppose $p=2n+3$.  Then
\[
E(k,n,p)\equiv -k^{n+1}(k-1)^{n+1}p^{2n+2}\pmod{p^{2n+3}}
\]
\end{proposition}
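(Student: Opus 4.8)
The plan is to recognize $E(k,n,p)$ as precisely the truncation error $E_N$ of Theorem \ref{gwc} in the boundary case $N=p-3$, and then to evaluate the leading coefficient of that error by means of the invariant $C_n(k)$ computed in the preceding Proposition. First I would note that, since the optimized congruence has $b_{n+1}=\cdots=b_{2n}=0$, the partial sum $\sum_{j=0}^{n}b_{j,n}(k)p^{j}\H(\{1\}^{j})$ coincides with $\sum_{j=0}^{2n}b_{j}p^{j}\H(\{1\}^{j})$ for any choice of integer data $(c_0,c_1,\ldots)$ producing the optimal congruence. Hence, setting $N=2n$, we have $E(k,n,p)=E_N$, the error term of Theorem \ref{gwc}. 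The hypothesis $p=2n+3$ is exactly the condition $N=2n=p-3$, so case (ii) of Theorem \ref{gwc} applies and gives
\[
E(k,n,p)\equiv\left(\frac{b_{2n+1}}{2}-b_{2n+2}\right)p^{2n+2}\pmod{p^{2n+3}}.
\]

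The crux is then to identify $\frac{b_{2n+1}}{2}-b_{2n+2}\bmod p$. Here I would invoke the preceding Proposition, which asserts that $(n+1)b_{2n+1}+b_{2n+2}=C_n(k)=k^{n+1}(k-1)^{n+1}$, independently of the choice of $c_i$. The key arithmetic observation is that $2(n+1)=p-1\equiv-1\pmod p$, so $n+1\equiv-\tfrac12\pmod p$; this is legitimate because $p$ is odd, whence $b_{2n+1},b_{2n+2}$ (taken integral) and the relevant halves are $p$-integral. Reducing the identity for $C_n(k)$ modulo $p$ yields $k^{n+1}(k-1)^{n+1}\equiv-\tfrac12 b_{2n+1}+b_{2n+2}\pmod p$, that is, $\frac{b_{2n+1}}{2}-b_{2n+2}\equiv-k^{n+1}(k-1)^{n+1}\pmod p$. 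Substituting this into the displayed congruence completes the proof.

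The computation is short; the one point requiring care — and the step I would flag as the main obstacle — is that the combination $\frac{b_{2n+1}}{2}-b_{2n+2}$ appearing in case (ii) of Theorem \ref{gwc} is \emph{not} literally the invariant $C_n(k)=(n+1)b_{2n+1}+b_{2n+2}$. These two linear forms in $b_{2n+1},b_{2n+2}$ agree only after reduction modulo $p=2n+3$, precisely because that prime forces $n+1\equiv-\tfrac12$. Since $b_{2n+1}$ and $b_{2n+2}$ individually depend on the (non-unique) choice of $c_i$, it is essential to pass through the $c_i$-invariant $C_n(k)$ rather than attempting to pin down the two coefficients separately; the mod-$p$ collapse of the two linear forms is exactly what makes the answer well-defined and equal to $-k^{n+1}(k-1)^{n+1}$.
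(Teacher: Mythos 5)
Your proof is correct and takes essentially the same route as the paper: both reduce to case (ii) of Theorem \ref{gwc} with $N=2n=p-3$, so that $E(k,n,p)\equiv\left(\frac{b_{2n+1}}{2}-b_{2n+2}\right)p^{2n+2}\pmod{p^{2n+3}}$, and then evaluate this modulo $p$ through the $c_i$-invariant $C_n(k)=(n+1)b_{2n+1}+b_{2n+2}=k^{n+1}(k-1)^{n+1}$ together with the congruence $n+1\equiv-\tfrac12\pmod{p}$. Your closing observation, that the linear form $\frac{b_{2n+1}}{2}-b_{2n+2}$ agrees with $-C_n(k)$ only modulo $p=2n+3$ and that this collapse is what makes the answer independent of the non-unique choice of the $c_i$, is precisely the (implicit) content of the paper's computation.
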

\begin{proof}
By Theorem \ref{gwc}, we have
\[
E(k,n,p)= b_{2n+1}p^{2n+1}\H(\{1\}^{2n+1})+b_{2n+2}p^{2n+2}\H(\{1\}^{2n+2})
\]
Using Proposition \ref{scc}, we can write
\begin{eqnarray*}
E(k,n,p)&\equiv&\frac{b_{2n+1}}{2}p^{2n+2}-b_{2n+2}p^{2n+2}\\
&\equiv&\left(-\frac{(p-1)b_{2n+1}}{2}-b_{2n+1}\right)p^{2n+2}\\
&\equiv&-\left((n+1)b_{2n+1}+b_{2n+2}\right)\\
&\equiv&-k^{n+1}(k-1)^{n+1}p^{2n+2}\pmod{p^{2n+3}}
\end{eqnarray*}
\end{proof}

We can now state the precise conditions under which the congruences \eqref{goodp}, \eqref{badp} hold modulo a larger power of $p$ than is given by Theorem \ref{MT}.  The following Theorem is an immediate consequence of the preceding two propositions:

%*****************
%Theorem 5.4
%*****************
\begin{theorem}
Let $n\geq0$, $k$ be integers, $p$ an odd prime, and $b_{j,n}(T)$ the \optimal polynomials (characterized by Proposition \ref{congforb}).
\begin{enumerate}[(i)]
\item Suppose $p\geq 2n+5$.  The congruence
\[
{kp-1\choose p-1}\equiv \sum_{j=0}^nb_{j,n}(k)p^j\H(\{1\}^j; p-1)\pmod{p^{2n+4}}
\]
holds if and only if either $p|B_{p-3-2n}$ or $k\equiv 0,1\pmod{p}$.

\item  Suppose $p=2n+3$.  The congruence
\[
{kp-1\choose p-1}\equiv \sum_{j=0}^nb_{j,n}(k)p^j\H(\{1\}^j; p-1)\pmod{p^{2n+3}}
\]
holds if and only if $k\equiv 0,1\pmod{p}$.
\end{enumerate}
\end{theorem}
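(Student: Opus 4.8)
The final theorem characterizes precisely when the optimized Wolstenholme congruence holds modulo an extra power of $p$. The plan is to observe that this theorem follows immediately by combining the two preceding propositions, which compute $E(k,n,p)$ exactly modulo one extra power of $p$ in the two relevant ranges of $p$. Indeed, once we have explicit expressions for the leading error term, the ``holds modulo an extra power'' condition reduces to asking when that leading term vanishes $p$-adically.

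\medskip

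\noindent\textbf{Case $p \ge 2n+5$.} Here the preceding proposition gives
\[
E(k,n,p)\equiv \frac{-B_{p-3-2n}}{2n+3}\,k^{n+1}(k-1)^{n+1}\,p^{2n+3}\pmod{p^{2n+4}}.
\]
Since Theorem \ref{MT} already tells us $E(k,n,p)\equiv 0 \pmod{p^{2n+3}}$, the extra congruence $E(k,n,p)\equiv 0\pmod{p^{2n+4}}$ holds if and only if the coefficient $\frac{-B_{p-3-2n}}{2n+3}k^{n+1}(k-1)^{n+1}$ is divisible by $p$. First I would check that the denominator $2n+3$ is prime to $p$: this is automatic since $p\ge 2n+5 > 2n+3$. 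Then, because $k^{n+1}(k-1)^{n+1}\equiv 0 \pmod p$ exactly when $k\equiv 0$ or $k\equiv 1 \pmod p$, the vanishing of the whole coefficient mod $p$ is equivalent to ``$p \mid B_{p-3-2n}$ or $k\equiv 0,1 \pmod p$,'' giving part (i). The one point requiring a brief justification is that $B_{p-3-2n}$ is $p$-integral so that ``$p$ divides $B_{p-3-2n}$'' makes sense as a statement about its numerator; by the von Staudt--Clausen theorem $B_{p-3-2n}$ has no $p$ in its denominator provided $(p-1)\nmid(p-3-2n)$, which holds here since $0 < p-3-2n < p-1$ (using $p\ge 2n+5$).

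\medskip

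\noindent\textbf{Case $p = 2n+3$.} Here the second preceding proposition gives the exact congruence
\[
E(k,n,p)\equiv -k^{n+1}(k-1)^{n+1}\,p^{2n+2}\pmod{p^{2n+3}},
\]
and Theorem \ref{MT} guarantees $E(k,n,p)\equiv 0 \pmod{p^{2n+2}}$. Thus the extra congruence modulo $p^{2n+3}$ holds if and only if $k^{n+1}(k-1)^{n+1}\equiv 0 \pmod p$, i.e.\ if and only if $k\equiv 0,1 \pmod p$. Note that no Bernoulli condition appears in this case: the computation of the preceding proposition used Proposition \ref{propextra} rather than Proposition \ref{scc}, so the Bernoulli number was replaced by an explicit rational constant, and the only surviving obstruction is the factor $k^{n+1}(k-1)^{n+1}$.

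\medskip

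\noindent I expect no substantive obstacle, since both propositions have already done the analytic work of isolating the leading error term; the theorem is essentially a restatement. The only care needed is the elementary translation of ``coefficient $\equiv 0 \pmod p$'' into the stated disjunction, together with the $p$-integrality remark for $B_{p-3-2n}$ in case (i). Accordingly I would simply state that the result is an immediate consequence of the two preceding propositions, supplying the short divisibility argument above.
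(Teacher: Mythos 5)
Your proposal is correct and follows exactly the paper's route: the paper likewise presents the theorem as an immediate consequence of the two preceding propositions computing $E(k,n,p)$ modulo $p^{2n+4}$ (for $p\ge 2n+5$) and modulo $p^{2n+3}$ (for $p=2n+3$). Your added remarks --- that $p\nmid 2n+3$ when $p\ge 2n+5$, and that $B_{p-3-2n}$ is $p$-integral by von Staudt--Clausen since $(p-1)\nmid(p-3-2n)$ --- are small justifications the paper leaves implicit, and they are accurate.
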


%*****************
%
%Section 6
%
%*****************
\section{Properties of the \Optimal Polynomials}\label{sec5}
%As remarked above, the recursions defining the optimal coefficients have some resemblance
%to those for Bernoulli numbers. 

The \optimal polynomials  $b_{j,n}(T)$ satisfy many arithmetic and congruence relations. 

%*****************
%Proposition 6.1
%*****************
\begin{proposition}
\label{propbasic}
The \optimal polynomials $b_{j,n}(T)$ satisfy the following properties:
\begin{enumerate}[(i)]
\item For all non-negative integers $n$, $b_{0,n}(T)=1$
\item For all non-negative integers $n$, $b_{n,n}(T)=T^n(T-1)^n$
\item For all non-negative integers $j\leq2n$, $T^j(T-1)^j$ divides $b_{j,n}(T)$.
\end{enumerate}
\end{proposition}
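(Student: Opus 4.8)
The plan is to derive all three parts directly from the characterization of the optimal polynomials provided by Proposition \ref{congforb}: for $0 \le j \le 2n$, the polynomial $b_{j,n}(T) \in \Z[T]$ is the unique polynomial of degree at most $2n$ with $b_{j,n}(T) \equiv (T-1)^j \pmod{(T-1)^{n+1}}$ and $b_{j,n}(T) \equiv (-T)^j \pmod{T^{n+1}}$. Since the moduli $(T-1)^{n+1}$ and $T^{n+1}$ are coprime, these two congruences will be the only input needed, together with elementary divisibility and degree arguments.

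I would treat part (iii) first. The cases $n+1 \le j \le 2n$ are vacuous, since there $b_{j,n}(T)=0$ by convention and $0$ is divisible by everything. For $0 \le j \le n$, the first congruence of Proposition \ref{congforb} lets me write $b_{j,n}(T) = (T-1)^j + (T-1)^{n+1} g(T)$ for some $g \in \Z[T]$; because $j \le n+1$, the factor $(T-1)^j$ divides both terms, so $(T-1)^j \mid b_{j,n}(T)$. The identical argument applied to the second congruence gives $T^j \mid b_{j,n}(T)$, and since $T^j$ and $(T-1)^j$ are coprime monic polynomials, their product $T^j(T-1)^j$ divides $b_{j,n}(T)$.

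Part (i) is then the case $j=0$ read off from the characterization itself: the constant $1$ satisfies $1 \equiv (T-1)^0 \pmod{(T-1)^{n+1}}$ and $1 \equiv (-T)^0 \pmod{T^{n+1}}$ and has degree $0 \le 2n$, so the uniqueness in Proposition \ref{congforb} forces $b_{0,n}(T)=1$. For part (ii) I would combine part (iii) with a degree count: taking $j=n$ in (iii) gives $T^n(T-1)^n \mid b_{n,n}(T)$, and since $T^n(T-1)^n$ has degree exactly $2n$ while $\deg b_{n,n} \le 2n$, necessarily $b_{n,n}(T) = c\, T^n(T-1)^n$ for a scalar $c$. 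To pin down $c=1$, I would verify that $T^n(T-1)^n$ already satisfies both congruence conditions for $j=n$ --- modulo $(T-1)^{n+1}$ it reduces to $(T-1)^n$ since $T^n \equiv 1 \pmod{T-1}$, and modulo $T^{n+1}$ it reduces to $(-T)^n$ since $(T-1)^n \equiv (-1)^n \pmod{T}$ --- so that uniqueness again applies and forces $c=1$.

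I do not anticipate a substantive obstacle once Proposition \ref{congforb} is available; each assertion is a one-line divisibility or degree argument. The only place demanding mild care is the verification in part (ii) that $T^n(T-1)^n$ meets the two congruences: one must apply the reductions $T^n \equiv 1 \pmod{T-1}$ and $(T-1)^n \equiv (-1)^n \pmod{T}$ at the correct modulus (lifting from modulus $T-1$ to $(T-1)^{n+1}$ uses that the remaining factor already carries $(T-1)^n$), and this is the step I would write out in full.
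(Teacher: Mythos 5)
Your proposal is correct and takes essentially the same approach as the paper, which likewise deduces all three parts from the characterization in Proposition \ref{congforb} (checking that $1$ and $T^n(T-1)^n$ satisfy the two congruence conditions and invoking uniqueness for (i) and (ii), and reading off the divisibility in (iii) directly). The paper states this in one line; your write-up simply supplies the routine verifications, all of which are accurate.
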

\begin{proof}
\emph{(i)} and \emph{(ii)} follow by checking that the given polynomial satisfies the conditions of Proposition \ref{congforb}.  \emph{(iii)} also follows immediately from Proposition \ref{congforb}.
\end{proof}

For fixed $j$, the polynomials $b_{j,n}(T)$ depend on $n$. One exception to this is that $b_{0,n}(T)=1$ for all non-negative integers $n$.  Examining the table in Section 1.2, we see that $b_{1,n}(T)$ does indeed depend on $n$.  However, $b_{1,n}(T)+b_{2,n}(T)=T^2-T$ for all $n$.

This is the first in a family of equations giving linear combinations of the \optimal polynomials $b_{j,n}(T)$ ($j$ varying) which are independent of $n$.

%******************
%Proposition 6.2
%*****************
\begin{proposition}
Let $m$ be a non-negative integer.  Suppose we are given $f(T)=\sum_{j=0}^ma_jT^j\in\Q[T]$ satisfying $f(T)=f(-1-T)$.  Then, for all non-negative integers $n$ with $2n\geq m$, we have
\[
\sum_{j=0}^ma_jb_{j,n}(T)=f(T-1)
\]
\end{proposition}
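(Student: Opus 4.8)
The plan is to compare the two polynomials $g(T):=\sum_{j=0}^m a_j b_{j,n}(T)$ and $f(T-1)$ by showing that they agree modulo both $(T-1)^{n+1}$ and $T^{n+1}$, and then to close with a degree count. The whole strategy rests on the characterization of the \optimal polynomials in Proposition \ref{congforb}: each $b_{j,n}(T)$ has degree at most $2n$ and satisfies $b_{j,n}(T)\equiv (T-1)^j \pmod{(T-1)^{n+1}}$ and $b_{j,n}(T)\equiv (-T)^j \pmod{T^{n+1}}$. Since $m\le 2n$, every $b_{j,n}(T)$ appearing in the sum is defined and has degree at most $2n$, so $\deg g\le 2n$; likewise $\deg f(T-1)=\deg f\le m\le 2n$.

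First I would use the first of these congruences. Summing $b_{j,n}(T)\equiv (T-1)^j \pmod{(T-1)^{n+1}}$ against the coefficients $a_j$ gives immediately
\[
g(T)\equiv \sum_{j=0}^m a_j (T-1)^j = f(T-1)\pmod{(T-1)^{n+1}}.
\]
Next I would use the second congruence together with the functional equation. Summing $b_{j,n}(T)\equiv (-T)^j \pmod{T^{n+1}}$ yields $g(T)\equiv \sum_j a_j(-T)^j=f(-T)\pmod{T^{n+1}}$. The main point—the one step that is genuinely clever rather than routine—is to recognize that the hypothesis $f(T)=f(-1-T)$ is exactly what converts this target into the one we want: substituting $-T$ for the free variable in the identity $f(s)=f(-1-s)$ gives $f(-T)=f(T-1)$. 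Hence
\[
g(T)\equiv f(T-1)\pmod{T^{n+1}}.
\]

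Finally I would combine the two. Since $T$ and $T-1$ generate coprime ideals in $\Q[T]$, the Chinese Remainder Theorem gives $g(T)\equiv f(T-1)\pmod{T^{n+1}(T-1)^{n+1}}$, so $g(T)-f(T-1)$ is divisible by a polynomial of degree $2n+2$. But both $g(T)$ and $f(T-1)$ have degree at most $2n<2n+2$, so their difference has degree at most $2n$ while being divisible by a degree-$(2n+2)$ polynomial, forcing it to vanish. This yields $g(T)=f(T-1)$, as claimed. I expect no real obstacle here beyond spotting the reformulation $f(-T)=f(T-1)$ of the symmetry hypothesis; once that is in hand the argument is the same CRT-plus-degree-bound matching already used to establish uniqueness in the proof of Proposition \ref{congforb}.
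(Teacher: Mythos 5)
Your proposal is correct and follows essentially the same route as the paper: both reduce the claim via the two congruence conditions of Proposition \ref{congforb} (using the symmetry hypothesis in the form $f(-T)=f(T-1)$ for the modulus $T^{n+1}$), then conclude by the Chinese remainder theorem together with a degree bound. The only cosmetic difference is that you invoke the sharper degree bound $2n$ where the paper uses $2n+1$; either suffices against the modulus of degree $2n+2$.
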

\begin{proof}
Define
\[
g(T)=\sum_{j=0}^ma_jb_{j,n}(T)
\]
By Proposition \ref{congforb}, we have $b_{j,n}(T)\equiv (T-1)^j\pmod{(T-1)^{n+1}}$, so that
\begin{eqnarray*}
g(T)&\equiv& \sum_{j=0}^m a_j (T-1)^j\\
&\equiv&f(T-1)\pmod{(T-1)^{n+1}}
\end{eqnarray*}
Similarly, by Proposition \ref{congforb}, we have $b_{j,n}(T)\equiv (-T)^j\pmod{T^{n+1}}$, so that
\begin{eqnarray*}
g(T)&\equiv& \sum_{j=0}^m a_j (-T)^j\\
&\equiv&f(-T)\\
&\equiv&f(T-1)\pmod{(T-1)^{n+1}}
\end{eqnarray*}
By the Chinese remainder theorem, it follows that\\
$g(T)\equiv f(T-1)\pmod{T^{n+1}(T-1)^{n+1}}$.  Now, we just note that $g(T)$ and $f(T-1)$ have degree at most $2n+1$, so they must be equal.
\end{proof}

\paragraph{\bf Acknowledgments.} I thank J.\ Lagarias for pointing out this problem, 
%pointing out the work of  Me\v strovi\' c \cite{Mes11}, 
and for  editorial comments and references.  I also thank E.\ H.\ Brooks for introducing the author to Wolstenholme's theorem, and for helpful comments. This work was supported in part by NSF grants  DMS-0943832 and DMS-1101373.

\bibliographystyle{amsplain}

\end{document}